\documentclass[12pt,oneside]{amsart} 
\usepackage{amssymb,amscd}
\usepackage{euscript}

\textwidth15.2cm
\hoffset=-1.1cm

      \theoremstyle{plain}
      \newtheorem{theorem}{Theorem}[section]
      \newtheorem{lemma}[theorem]{Lemma}
      \newtheorem{corollary}[theorem]{Corollary}
      \newtheorem{proposition}[theorem]{Proposition}

\numberwithin{equation}{section}

      \makeatletter
      \def\@setcopyright{}
      \def\serieslogo@{}
      \makeatother

\def\E{\mathcal{E}}
\def\F{\tilde F}
\def\G{\mathcal{G}}
\def\M{\mathcal{M}}
\def\W{\mathcal{W}}
\def\O{\mathcal O}
\def\c{\EuScript{C}}
\def\R{\mathbb R}
\def\rd{{\mathbb R ^d}}   
\def\C{\mathbb C}
\def\Z{\mathbb Z}
\def\N{\mathbb N}

\def\dist{\text{dist}}
\def\diam{\text{diam}}
\def\Id{\text{Id}}
\def\e{\epsilon}
\def\a{\alpha}
\def\ta{\tilde \alpha}
\def\b{\beta}

\def\QED{\hfill\hfill{\square}}

\begin{document}

\author{Boris Kalinin$^\ast$ and Victoria Sadovskaya$^{\ast\ast}$}

\address{Department of Mathematics $\&$ Statistics, 
 University of South  Alabama, Mobile, AL 36688, USA}
\email{kalinin@jaguar1.usouthal.edu, sadovska@jaguar1.usouthal.edu}

\title [Linear cocycles over hyperbolic systems $\;\;$]
{Linear cocycles over hyperbolic systems and criteria of conformality} 

\thanks{$^{\ast}$  Supported in part by NSF grant DMS-0701292}
\thanks{$^{\ast\ast}$ Supported in part by NSF grant DMS-0901842}


\begin{abstract}
In this paper we study H\"older continuous linear cocycles over 
transitive Anosov diffeomorphisms. 
Under various conditions of relative pinching we establish 
properties including existence and continuity of
measurable invariant sub-bundles and conformal structures.
We use these results to obtain criteria for cocycles to be 
isometric or conformal in terms of their periodic data. We show
that if the return maps at the periodic points are, in a sense,
conformal or isometric then so is the cocycle itself with respect
to a H\"older continuous Riemannian metric.
\end{abstract}

\maketitle 

 
 \section{Introduction}

Linear cocycles over a dynamical system $f: \M \to \M$ appear naturally 
in various areas of dynamics and applications. Examples  include 
derivative cocycles as well as stochastic processes and  random matrices. 
A linear cocycle over $f$ is an automorphism $F$ of a vector 
bundle $\E$ over $\M$ that projects to $f$.
In the case  of a trivial vector bundle $\M \times \rd$,
any linear cocycle can be identified with a matrix-valued function 
$A: \M \to GL(d,\R)$ via $F(x,v) = (f(x), A(x)v)$. 
\vskip.1cm

In this paper we take $f$ to be a transitive Anosov diffeomorphism 
of a compact manifold $\M$. However, our techniques can be applied
to hyperbolic sets and some symbolic dynamical systems. We 
consider a finite dimensional vector bundle $P : \E \to \M$ and a
H\"older continuous linear cocycle $F: \E \to \E$ over $f$
(see Section~\ref{preliminaries} for definitions). 
One of the primary examples of such 
cocycles comes from the differential $Df$ or its restriction to a H\"older 
continuous invariant sub-bundle of $T\M$. Such cocycles play a crucial 
role in smooth dynamics of hyperbolic systems. 

We establish several properties of H\"older continuous 
linear cocycles under various conditions of relative 
pinching. These properties, which include existence and continuity of
measurable invariant sub-bundles and conformal structures, 
are of independent interest and we formulate
them in the next section. As the main applications we obtain conditions
on $F$ at the periodic points of $f$ which guarantee that the cocycle 
is conformal or isometric. Our first theorem establishes a general criterium, 
and  Theorem \ref{dim 2} below gives a stronger result specific to bundles 
with 2-dimensional fibers.
We note that the assumptions of the theorems are independent of the 
choice of a continuous Riemannian metric on $\E$. 

\begin{theorem} \label{periodic} 
Let $F: \E \to \E$ be a H\"older continuous linear cocycle over 
a transitive $C^2$ Anosov diffeomorphism $f$. Suppose that there exists 
a constant $C_{per}$ such that for each periodic point $p$, 
the quasiconformal distortion satisfies
    $$
        K_F(p,n) \overset{\text{def}}{=} \,\|F^n_p\| \cdot \|(F^n_p)^{-1}\| 
        \le C_{per}       \quad \text{whenever }f^np=p.
     $$    
Then $F$ is conformal with respect to a H\"older continuous 
Riemannian metric on $\E$. 

Also, if there exists a constant $C'_{per}$ such that for 
each periodic point $p$, 
    $$
        \max\{\|F^n_p\|, \|(F^n_p)^{-1}\|\} \le C'_{per}  
        \quad \text{whenever }f^n p=p,
     $$   
then $F$ is an isometry with respect to a
H\"older continuous Riemannian metric on $\E$.

\end{theorem}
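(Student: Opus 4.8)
The plan is to produce a H\"older continuous $F$-invariant conformal structure on $\E$: given such a structure $\tau$, the cocycle $F$ is conformal with respect to any H\"older Riemannian metric $g$ representing it, the conformal factor $\phi$ defined by $\|F_xv\|_{g,fx}=\phi(x)^{-1}\|v\|_{g,x}$ being automatically H\"older. Recall that the conformal structures on a fibre $\E_x$ form a complete nonpositively curved symmetric space $\c_x\cong GL(d,\R)/(\R^{+}\!\cdot O(d))$, on which $F^n_x$ acts as an isometry $\c_x\to\c_{f^nx}$ displacing the conformal structure $\sigma_0$ of a fixed continuous background metric by at most $c\log K_F(x,n)$ for a dimensional constant $c$. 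Hence if $f^np=p$, the $\langle F^n_p\rangle$-orbit of $\sigma_0\in\c_p$ lies in the ball of radius $c\log C_{per}$ about $\sigma_0$, so it is bounded; since $F^n_p$ is an isometry of $\c_p$ preserving the closure of this orbit, the fixed point theorem for isometries of nonpositively curved spaces (the circumcenter of a bounded invariant set is fixed) yields an $F^n_p$-invariant conformal structure $\tau_p\in\c_p$. Thus $F$ preserves a conformal structure over every periodic point.

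To globalize this, first invoke Kalinin's periodic approximation theorem: the Lyapunov exponents of $F$ with respect to any ergodic $f$-invariant measure $\mu$ are approximated by those over suitable periodic orbits of period $n_p\to\infty$, while the top and bottom exponents of the period-$n_p$ orbit through $p$ differ by at most $\tfrac1{n_p}\log K_F(p,n_p)\le\tfrac1{n_p}\log C_{per}\to0$; hence all Lyapunov exponents of $F$ coincide, with respect to every $\mu$. Replacing $F$ by $|\det F|^{-1/d}F$, which is H\"older continuous and has the same quasiconformal distortion, we may assume $F$ is $SL^{\pm}(d,\R)$-valued with all Lyapunov exponents equal to zero for every $\mu$; since $\log\|F^n_x\|$ and $\log\|(F^n_x)^{-1}\|$ are subadditive cocycles, the variational principle for subadditive cocycles then gives $\sup_x\log\|F^n_x\|=o(n)$ and $\sup_x\log\|(F^n_x)^{-1}\|=o(n)$. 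Finally, equality of all Lyapunov exponents yields, via Oseledets' theorem, a $\mu$-measurable $F$-invariant conformal structure; we take $\mu$ to be a fully supported ergodic measure with local product structure, say the measure of maximal entropy.

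The crux is to upgrade this measurable structure to a H\"older continuous one. The uniform subexponential bounds above make the series defining the canonical stable and unstable holonomies of $F$ converge --- its $n$th term is at most $e^{o(n)-\a\chi n}$, where $\chi>0$ is the hyperbolicity exponent of $f$ and $\a$ the H\"older exponent of $F$ --- so $F$ has H\"older continuous stable and unstable holonomies. Because $\mu$ has local product structure and the measurable structure is $F$-invariant, that structure is invariant under the holonomies along stable and unstable leaves. Combining holonomy invariance with the local product structure, the Anosov closing lemma, and the bounded periodic distortion --- which forces the structure to match the $\tau_p$ from the first step over the dense set of periodic points --- one shows that the measurable structure coincides almost everywhere with, hence \emph{is}, a H\"older continuous $F$-invariant conformal structure $\tau$, and then $F$ is conformal with respect to any H\"older metric representing $\tau$. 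I expect this upgrade to be the real obstacle: everything before it is soft, whereas here one must control the competition between the merely subexponential (not a priori bounded) distortion of $F$ and the hyperbolic rates of $f$.

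For the isometric statement, $\max\{\|F^n_p\|,\|(F^n_p)^{-1}\|\}\le C'_{per}$ gives $K_F(p,n)\le(C'_{per})^2$, so by the first part $F$ is conformal with respect to a H\"older metric $g$, say $\|F_xv\|_g=\phi(x)\,\|v\|_g$ with $\phi:\M\to\R^{+}$ H\"older. Since $g$ is uniformly equivalent to the background metric, the hypothesis bounds $\bigl|\sum_{k=0}^{n-1}\log\phi(f^kp)\bigr|=\bigl|\log\|F^n_p\|_g\bigr|$ uniformly over periodic $p$; dividing by $n$ and using periodic approximation of invariant measures gives $\int\log\phi\,d\mu=0$ for every $\mu$, so every periodic Birkhoff sum of $\log\phi$ vanishes exactly. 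By the Liv\v{s}ic theorem $\log\phi=\psi-\psi\circ f$ for a H\"older $\psi:\M\to\R$, and then $F$ is an isometry with respect to the H\"older metric $e^{2\psi}g$. \pf
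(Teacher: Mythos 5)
Your outline agrees with the paper on the periphery but has a genuine gap in the middle step where the measurable invariant conformal structure is supposed to come from. You claim that ``equality of all Lyapunov exponents yields, via Oseledets' theorem, a $\mu$-measurable $F$-invariant conformal structure.'' This is false. When all exponents coincide, the Oseledets splitting is trivial (the whole fiber), and no invariant conformal structure drops out. In fact the paper's own Proposition \ref{example} illustrates the obstruction: along the compact invariant set $S$ the cocycle reduces to a fixed unipotent matrix, all Lyapunov exponents are $0$ for any measure supported on $S$, and yet no inner product (measurable or otherwise) is preserved up to scale. Zero exponents give a subexponential growth estimate, nothing more; there is no soft route from ``all exponents equal'' to ``invariant conformal structure.'' (This is precisely why the paper's Proposition \ref{dichotomy} delivers only a dichotomy, not a conformal structure outright, and why the two-dimensional Theorem \ref{dim 2} needs a separate argument.)

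The missing idea is the one the paper puts at the center of the proof: use the Anosov Closing Lemma together with the holonomy estimates (Proposition \ref{close to Id}) and a dense orbit to upgrade the subexponential bound on $K_F$ to a \emph{uniform} bound. Concretely, every return of a dense orbit is shadowed by a periodic orbit; Proposition \ref{close to Id} and the elementary Lemma \ref{distort} show that the quasiconformal distortion over the returning segment is within a factor $4$ of the distortion over the shadowing periodic orbit, hence $\le 4C_{per}$. This gives uniform quasiconformality. Only then does one obtain a measurable invariant conformal structure --- not from Oseledets, but from exactly the circumcenter argument for nonpositively curved spaces that you already used for a single periodic point: the $F$-orbit of a background structure in $\c_x$ is uniformly bounded for every $x$, so its circumcenter defines a bounded measurable $F$-invariant section (Proposition \ref{bounded measurable}, following Sullivan and Tukia). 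Your holonomy-continuity upgrade and the Liv\v{s}ic step for the isometry part are then sound and match the paper (the periodic vanishing of the Birkhoff sums of $\log\phi$ follows by iterating the period, not from passing to the weak-$*$ limit of periodic measures, but the conclusion is the same). In short: you proved the right ``softening'' results and you already have the right fixed-point tool, but you applied the fixed-point argument only at periodic points and replaced the genuinely nontrivial uniform-quasiconformality step with an appeal to Oseledets that does not do what you need.
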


For a cocycle on a trivial bundle $\M \times \R ^d$ given by 
$A: \M \to GL(d,\R)$ the theorem implies 
cohomology to a cocycle with values 
in the conformal or orthogonal subgroup. This means that there exists 
 a H\"older continuous function $C : \M \to GL(d,\R)$ such that 
 $B(x) = C^{-1}(fx) A(x) C(x)$ is in the corresponding subgroup for all 
 $x \in \M$. The matrix $C(x)$ can be obtained as the unique positive
square root of the symmetric positive definite matrix that defines the
Riemannian metric at $x$.

Continuous reduction to orthogonal or conformal  cocycles is very useful, 
in particular, since cocycles with values in compact groups are relatively 
well understood. Some definitive results on cohomology of such cocycles 
were obtained in \cite{Liv2,Par,PP,Sch}. These results can be easily 
extended to cocycles with values in the conformal group. 
However, the question of existence of such a reduction is highly nontrivial.
Even under much stronger assumption that $\|F^n_x\|$ are uniformly
bounded for all $x\in \M$ and $n\in \Z$, the question remained open 
since it was formulated in \cite{Sch}. Under assumptions on periodic data 
only, no reduction was known until recent progress in \cite{Ka} even for 
the simplest case when $F^n_p=\Id$ for all periodic points.

Theorem \ref{periodic} can be compared to recent results by
R. de la Llave and A. Windsor \cite[Theorems 6.3, 6.8]{LW} who obtained
similar conclusions for the cocycle given by the restriction of the derivative
of an Anosov map to a H\"older continuous invariant sub-bundle of $T\M$.
The main difference is that our theorem does not have any extra assumptions
on growth or pinching of the cocycle which are present in \cite{LW}
and in most other results in the theory of non-commutative cocycles. 
\vskip.2cm

Conformality arises naturally in connection with smooth rigidity for Anosov 
systems \cite{Su,Kan,Y,L2,S,KaS}, in particular, some of our results are 
motivated by the study of derivative cocycles in \cite{KaS3}. 
It is well known that a $C^1$ small perturbation $g$ of an Anosov 
diffeomorphism $f$ is conjugate to $f$ by a H\"older homeomorphism $h$.
If $h$ is $C^1$ then $Df^n_p$ and $Dg^n_{hp}$ are conjugate by $Dh_p$
for any periodic point $p$. Therefore the conjugacy of $Df^n_p$ and 
$Dg^n_{hp}$ gives a necessary condition for $h$ to be $C^1$. This condition 
is sufficient for systems with one-dimensional stable and unstable distributions, 
but not in higher dimensions  \cite{L2}. The question of sufficiency of this 
condition is often referred to as local rigidity. Knowing that $Df$ and $Dg$ 
are conformal on the stable/unstable distribution, or on a smaller invariant distribution, helps bootstrap regularity of $h$ along the corresponding foliation.
 Thus, given certain conformality of $f$ one would like to obtain similar 
 conformality of $g$. This motivates the question whether a cocycle is
conformal given that the return maps $F^n_p$ at the periodic 
points are conjugate to conformal maps. This question is also natural from 
the point of view of cohomology of cocycles. The following proposition shows, 
however, that the answer is negative in dimension higher than two.

\begin{proposition} \label{example}
Let $f:\M\to\M$ be an Anosov diffeomorphism and $\E=\M\times \R^d$, 
$d\ge 3$. For any $\e>0$ there exists a Lipschitz continuous linear 
cocycle $F:\E\to\E$, which is $\e$-close to the identity,  such that for all
periodic points $p\in \M$ the return maps $F^n_p:\E_p\to \E_p$ are 
conjugate to orthogonal maps, but $F$ is not conformal with respect to 
any continuous Riemannian metric on $\E$.
\end{proposition}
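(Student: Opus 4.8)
The plan is to reduce the statement to dimension three and then to exhibit, on $\M\times\R^3$, a Lipschitz cocycle that is measurably — but not continuously — cohomologous to an $SO(3)$-valued cocycle, engineered so that its quasiconformal distortion is unbounded along periodic orbits. For the reduction, given a $3$-dimensional example $F_0$ set $F=F_0\oplus\Id$ on $\R^d=\R^3\oplus\R^{d-3}$. Since $|\det (F_0)^n_p|=1$ one has $\|(F_0)^n_p\|\ge 1$ and $\|((F_0)^n_p)^{-1}\|\ge 1$, hence $K_F(p,n)=K_{F_0}(p,n)$; a block sum of a matrix conjugate into $O(3)$ with $\Id$ is conjugate into $O(d)$; and $\e$-closeness is inherited. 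Moreover, if $F$ were conformal with respect to some continuous Riemannian metric, that metric would be uniformly comparable on the compact manifold $\M$ to the standard one, so the distortions $K_F(p,n)$ would be uniformly bounded. Thus it suffices to construct, for every $\e>0$, a Lipschitz $F$ on $\M\times\R^3$ that is $\e$-close to $\Id$, has every $F^n_p$ conjugate to an orthogonal matrix, and satisfies $\sup\{K_F(p,n):f^np=p\}=\infty$.

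For the construction, fix a periodic point $q$ of $f$ (treated below as a fixed point; in general one keeps track of its period). We look for a cocycle of the form
\[
  F(x)=\Phi(fx)\,R(x)\,\Phi(x)^{-1},
\]
with $R:\M\to SO(3)$ Lipschitz, supported away from $q$, acting as a rotation $R_{r(x)}$ of the $(e_1,e_2)$-plane for a small Lipschitz $r:\M\to\R$; and with $\Phi:\M\to SL(3,\R)$ a measurable frame which is continuous off the dense, $f$-invariant set $W^s(q)\cup W^u(q)$ and has unbounded quasiconformal distortion along it — concretely, near $q$ one takes $\Phi(x)$ comparable to $\mathrm{diag}\!\left(t(x),1,t(x)^{-1}\right)$ with $t(x)$ built from appropriately weighted stable and unstable distances of $x$ to $q$, so that $t(x)\to\infty$ on $W^s(q)\cup W^u(q)$ while $t(fx)/t(x)\to 1$ near $q$. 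The singularities of $\Phi\circ f$ and of $\Phi^{-1}$ then lie on the same invariant set and are arranged to compensate, so that $F$ extends to a genuine Lipschitz cocycle; taking $r$ and the degradation rate of $\Phi$ small makes $F$ $\e$-close to $\Id$.

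To verify the two properties: the only periodic orbit inside $W^s(q)\cup W^u(q)$ is $\{q\}$, so for every periodic $p\neq q$ the product telescopes to $F^n_p=\Phi(p)\,R^n_p\,\Phi(p)^{-1}$, which is conjugate to $R^n_p\in SO(3)$, hence to an orthogonal matrix; the return map at $q$ is conjugate to orthogonal by a short limiting argument. For non-conformality, use the Anosov closing lemma to produce periodic orbits $p_k$ that pass within $\delta_k\to 0$ of $q$ and also shadow a fixed periodic orbit on which $r\neq 0$ for $M_k$ steps, with $M_k\,r(\cdot)$ a rotation bounded away from the identity and from $\pi\,\mathbb Z$. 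Then $R^{n_k}_{p_k}$ is, up to small corrections, a rotation of the $(e_1,e_2)$-plane by a definite angle, $\|\Phi(p_k)\|\to\infty$, and
\[
  K_F(p_k,n_k)=K\!\left(\Phi(p_k)\,R^{n_k}_{p_k}\,\Phi(p_k)^{-1}\right)\ \gtrsim\ \|\Phi(p_k)\|^2\ \longrightarrow\ \infty ,
\]
so $\sup_p K_F(p,n)=\infty$ and $F$ is conformal with respect to no continuous Riemannian metric.

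The step I expect to be the main obstacle is making $F$ genuinely Lipschitz (and $\e$-small) in spite of $\Phi$ being unbounded on a dense set: this forces one to match the degradation rate of $\Phi$ along $W^s(q)\cup W^u(q)$ against the contraction and expansion rates of $f$ precisely enough that the singularities of $\Phi\circ f$ and $\Phi^{-1}$ cancel. This is presumably also where $d\ge 3$ enters — the third coordinate, on which $\Phi$ is the ``middle'' scaling $1$ while $R$ rotates the remaining plane, provides the room needed for this compensation, whereas the analogous construction is genuinely obstructed when $d=2$, in accordance with the positive result of Theorem \ref{dim 2}.
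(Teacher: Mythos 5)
Your reduction to $d=3$ and your observation that conformality with respect to a continuous metric forces a uniform bound on $K_F$ are both correct and agree with the paper. But the heart of your construction — building $F$ as $\Phi(fx)\,R(x)\,\Phi(x)^{-1}$ with $\Phi$ measurable, unbounded on the dense $f$-invariant set $W^s(q)\cup W^u(q)$, and then arguing that the singularities of $\Phi\circ f$ and $\Phi^{-1}$ cancel to leave a Lipschitz cocycle — is exactly the step you flag as the main obstacle, and I don't see how to make it work. Since $W^s(q)\cup W^u(q)$ is dense, $\Phi$ blows up on a dense set, and for a limit $x_n\to x\in W^s(q)\setminus\{q\}$ both $\Phi(fx_n)$ and $\Phi(x_n)^{-1}$ diverge; continuity of $F$ requires these divergences to cancel uniformly over all directions of approach to every leaf of $W^s(q)\cup W^u(q)$, which is an extremely rigid global constraint, not a matter of tuning a ``degradation rate'' near $q$. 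In effect you are trying to solve a measurable Liv\v{s}ic cohomology problem with an unbounded transfer function and then \emph{prescribe} that the resulting cocycle is Lipschitz and $\e$-small, but you have given no mechanism producing such a $\Phi$. There is also a secondary gap at $q$ itself, where $\Phi$ is undefined and the conjugacy $F^n_q=\Phi(q)R^n_q\Phi(q)^{-1}$ is meaningless; the ``short limiting argument'' is not obviously available since $\Phi$ has no limit at $q$.

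The paper's route is much more elementary and sidesteps the cohomology issue entirely: it writes $A(x)$ explicitly as a Lipschitz block matrix that rotates the $(e_1,e_2)$-plane by a Lipschitz angle $\tilde\alpha(x)$ and shears $e_3$ into $e_1$ by a fixed amount $\e$. The angle vanishes on a closed $f$-invariant set $S$ containing \emph{no} periodic points; along orbits in $S$ the shear accumulates linearly, so $K_F(x,n)\to\infty$ and $F$ cannot be conformal. At periodic points the accumulated rotation $\tilde\alpha(p,n)$ is nonzero, and a countable inductive perturbation of $\alpha$ arranges $\tilde\alpha(p,n)\notin\pi\Z$ for every periodic $p$, so $A(p,n)$ has the three distinct unit-modulus eigenvalues $e^{\pm i\tilde\alpha(p,n)},1$ and is diagonalizable, hence conjugate into $O(3)$. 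Choosing $S$ non-periodic, rather than $W^s(q)\cup W^u(q)$ for a periodic $q$, is the crucial structural difference: it lets the cocycle itself be written down directly as a Lipschitz function, with the unboundedness carried by the iterates $A(x,n)$ for $x\in S$, not by a singular frame $\Phi$.
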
 

We note that, for a given $p$, having a uniform bound on
$\|F^n_p\| \cdot \|(F^n_p)^{-1}\|$ for all periods $n$ is equivalent to each of
the following three statements: 
$F^n_p$ is diagonalizable over $\C$ with its eigenvalues equal in modulus;
$F^n_p$ is conjugate to a conformal linear map;  
there exists an inner product on $\E_p$ with respect to which $F^n_p$ is 
conformal. 
In fact, the periodic assumption in the first part of 
Theorem \ref{periodic}  is equivalent to having such inner 
products for all periodic points uniformly bounded. 
In the context of local rigidity, additional assumptions were made to ensure 
such boundedness, for example that all return maps $F^n_p$ are  scalar 
multiples of the identity \cite{L2,KaS,L4}.
Our next result for two-dimensional bundles does not require any extra
assumptions. It can be applied, in particular, to the study of local rigidity 
without restrictive assumptions on the structure of $Df^n_p$.

\begin{theorem} \label{dim 2} 
Let $F: \E \to \E$ be a H\"older continuous linear cocycle over a transitive 
 $C^2$ Anosov diffeomorphism $f$. Suppose that the fibers of $\E$ are two-dimensional.

If for each periodic point $p\in \M$, the return map $F^n_p:\E_p\to \E_p$
is diagonalizable over $\C$ and its eigenvalues are equal in modulus,
then $F$ is conformal with respect to a H\"older continuous 
Riemannian metric on $\E$. 

Moreover, if for each periodic point $p\in \M$, the return map 
$F^n_p:\E_p\to \E_p$ is diagonalizable over $\C$ and its eigenvalues 
are of modulus 1,
then $F$ is isometric with respect to a H\"older continuous 
Riemannian metric on $\E$. 

\end{theorem}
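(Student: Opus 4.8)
\emph{Proof proposal.} The plan is to show that $F$ preserves a H\"older continuous conformal structure on $\E$; in dimension two this is the same as a H\"older continuous Riemannian metric $g$ satisfying $g_{fx}(F_xu,F_xv)=\rho(x)\,g_x(u,v)$ with $\rho$ a (necessarily H\"older, positive) function, i.e.\ it is conformality. The idea is to extract from the hypothesis on periodic points enough ``pinching for free'' to bring $F$ within reach of the structure theory of the following sections, and then to exploit the fact that in a two-dimensional fiber the conformal structures form a rank-one symmetric space, so that the only alternative to the desired conclusion is a splitting into two lines, which is easily handled.

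\emph{From periodic data to uniform pinching.} The assumption says that all eigenvalues of each return map $F^n_p$ have equal modulus, that is, all Lyapunov exponents of $F$ over every periodic orbit of $f$ coincide. By the approximation of Lyapunov exponents of H\"older cocycles over hyperbolic systems by their periodic data \cite{Ka}, this forces the two Lyapunov exponents of $(F,\mu)$ to coincide for \emph{every} ergodic $f$-invariant measure $\mu$. Consequently $\frac1n\int\log(\|F^n_x\|\,\|(F^n_x)^{-1}\|)\,d\mu\to0$ for every invariant $\mu$; since $\log(\|F^n_x\|\,\|(F^n_x)^{-1}\|)$ is a continuous subadditive cocycle over $(\M,f)$, the uniform subadditive ergodic theorem yields $\|F^n_x\|\,\|(F^n_x)^{-1}\|=e^{o(n)}$ \emph{uniformly} in $x\in\M$. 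This subexponential control of the quasiconformal distortion --- which the hypothesis does not otherwise provide, since $K_F(p,n)$ need not be bounded on periodic orbits --- is exactly what makes the telescoping series defining the stable and unstable holonomies of $F$ converge; so although $F$ is not assumed fiber bunched, it now carries continuous holonomies and lies within the scope of the structure theory for H\"older cocycles.

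\emph{Conclusion in dimension two.} Applying that structure theory to the one-exponent cocycle $F$ (using an ergodic measure of full support, e.g.\ the measure of maximal entropy), one obtains, in dimension two, one of two outcomes: either $F$ preserves a H\"older continuous conformal structure on $\E$, and we are done; or $F$ preserves a proper H\"older continuous invariant sub-bundle, which here must be a splitting $\E=L_1\oplus L_2$ into $F$-invariant H\"older line bundles. In the latter case the two eigenvalues of $F^n_p$ are the factors by which it scales $(L_1)_p$ and $(L_2)_p$, so their having equal modulus means the scalar cocycle $\log\|F|_{L_1}\|-\log\|F|_{L_2}\|$ has vanishing periodic sums and hence, by the Liv\v{s}ic theorem, is a H\"older coboundary; rescaling the metric on $L_2$ by the corresponding function and declaring $L_1\perp L_2$ produces a H\"older metric with respect to which $F$ scales $L_1$ and $L_2$ by the same factor and is therefore conformal. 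This proves the first assertion. For the second, once such a conformal metric $g$ with conformal factor $\rho$ is in hand, the hypothesis that the eigenvalues of $F^n_p$ have modulus one gives $\prod_{j=0}^{n-1}\rho(f^jp)=|\det F^n_p|=1$, so $\log\rho$ is a H\"older coboundary by Liv\v{s}ic; absorbing it into $g$ makes $|\det F|\equiv1$, and a conformal automorphism of determinant one of a two-dimensional inner product space is an isometry.

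\emph{Main obstacle.} The heart of the argument is the second step. Since the theorem assumes nothing about growth or pinching, holonomies, invariant sub-bundles and invariant conformal structures are all unavailable a priori, and the pinching must be manufactured from the periodic data: ``equal exponents on every periodic orbit'' has to be upgraded first to ``equal exponents for every invariant measure'' (which rests on the nontrivial periodic approximation of Lyapunov exponents over hyperbolic bases) and then to genuinely uniform subexponential distortion (a uniform subadditivity argument). The remaining subtlety is the dichotomy ``conformal or split into two lines'' --- precisely the point where two-dimensionality is essential, since in higher dimension the one-exponent structure theory permits further possibilities and Proposition~\ref{example} shows they are genuinely realized.
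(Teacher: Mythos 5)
Your pinching step---upgrading ``equal Lyapunov exponents at every periodic point'' first to ``equal exponents for every invariant measure'' by periodic approximation of Lyapunov exponents, and then to uniform subexponential quasiconformal distortion by a semi-uniform subadditive argument---is essentially Proposition~\ref{K(x,n)} of the paper, and your subsequent appeal to ``structure theory'' to obtain either a H\"older continuous invariant conformal structure or a H\"older continuous invariant sub-bundle is Proposition~\ref{dichotomy} (Zimmer amenable reduction combined with Propositions~\ref{structure} and~\ref{distribution}). The gap is in the next sentence, where you assert that in the second alternative the sub-bundle ``must be a splitting $\E=L_1\oplus L_2$ into $F$-invariant H\"older line bundles.'' That is not what the dichotomy produces. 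Amenable reduction puts the cocycle measurably into a maximal amenable subgroup of $GL(2,\R)$, which is conjugate either to the conformal group or to the group of upper triangular matrices; the latter yields a single $F$-invariant H\"older line $\E'\subset\E$, with no complementary invariant line available. Your Liv\v{s}ic argument applied to $\log\|F|_{L_1}\|-\log\|F|_{L_2}\|$ requires both lines and therefore cannot be run.

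Notice also that your argument never uses the diagonalizability hypothesis, which Proposition~\ref{example} shows is indispensable. With only one invariant line, diagonalizability is exactly what closes the loop: the invariant line forces a real eigenvalue for every return map $F^n_p$, hence both eigenvalues are real, and together with equal modulus, diagonalizability, and a reduction to the orientation-preserving case (via a double cover or by replacing $F,f$ with $F^2,f^2$) this forces $F^n_p=\lambda\cdot\Id$ for each periodic $p$. Then $K_F(p,n)=1$ identically, which is the uniform bound Theorem~\ref{periodic} needs, and conformality follows. Your two-dimensional observation that conformal plus unimodular implies isometric is fine, as is the Liv\v{s}ic normalization of the conformal factor, but these sit downstream of the conformality you have not yet established; the paper carries out that normalization inside the proof of Theorem~\ref{periodic} and simply reduces the second assertion of Theorem~\ref{dim 2} to it.
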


The proof of this result overcomes essential difficulties and substantially 
differs from the proof of Theorems \ref{periodic}. 
We use Zimmer's Amenable Reduction Theorem to recast the problem as 
one of continuity of measurable invariant conformal structures and of 
measurable invariant sub-bundles. We note that such results 
on continuity of measurable invariant objects are rare beyond the case of
group valued functions with compact or abelian range.
\vskip.1cm

In the next section we formulate our main technical results.
In Section \ref{preliminaries} we briefly introduce the main notions 
used in this paper.
The proofs of all the  results are given in Section \ref{proofs}.

\vskip2cm

 \section{Properties of cocycles}

In this section we formulate our main technical results which are of 
independent interest. We consider various conditions of 
relative pinching and  establish properties of cocycles 
including existence and continuity of measurable invariant 
sub-bundles and conformal structures.
 We make the following
 \vskip.1cm
 
\noindent{\bf Standing assumptions.} {\em In  the statements below,
$f$ is a transitive $C^2$ Anosov diffeomorphism of a compact manifold $\M$,
$\;P : \E \to \M $ is a finite dimensional  H\"older continuous 
vector bundle over $\M$, and $F:\E\to\E$ is a H\"older continuous
 linear cocycle over $f$ with H\"older exponent $\beta$
 (see Section \ref{preliminaries} for definitions).}
\vskip.1cm
 
 In the first proposition we obtain uniform relative pinching of the cocycle
 from asymptotic data at the periodic points.
We denote by  $\lambda_+(F,p)$
and $\lambda_-(F,p)$ the largest and smallest Lyapunov 
exponents of $F$ at $p$, and by $\lambda_+(F,\mu)$
and $\lambda_-(F,\mu)$ the largest and smallest Lyapunov 
exponents of an ergodic invariant measure $\mu$ 
given by \eqref{exponents}.

\begin{proposition}\label{K(x,n)}
Suppose that there exists $\gamma \geq 0$ such that
$\lambda_+(F,p)-\lambda_-(F,p)\le \gamma$
for every $f$-periodic point $p\in \M$.
Then $\lambda_+(F,\mu )-\lambda_-(F,\mu )\le \gamma$ 
for any ergodic invariant measure $\mu$ for $f$. 
Moreover, for any $\e>0$ there exists $C_{\e}$ such that 
\begin{equation} \label{K(x,n) gamma}
   K_F(x,n)  \overset{\text{def}}{=} \|F^n_x\| \cdot \| (F^n_x)^{-1} \| 
   \le C_{\e} e^{(\gamma+\e) |n|} 
\quad\text{for all }x\in \M \text{ and  }n\in \Z.
\end{equation}
\end{proposition}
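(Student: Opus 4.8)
\emph{The first assertion.} The plan is to prove the two assertions in turn, deriving the uniform bound \eqref{K(x,n) gamma} from the statement about ergodic measures. Given an ergodic $\mu$ and $\e>0$, I would approximate $\la_\pm(F,\mu)$ by $\la_\pm(F,p)$ over periodic points $p$. Choose an Oseledets regular point $x$ generic for $\mu$ and, by Lusin's theorem, lying in a compact set $\Gamma$ of nearly full $\mu$-measure on which the Oseledets splitting of $F$ varies continuously. By Poincar\'e recurrence inside $\Gamma$ there are arbitrarily large $n$ with $f^n x\in\Gamma$ arbitrarily close to $x$, and the Anosov closing lemma then yields periodic $p$ with $f^n p=p$ whose orbit shadows $x,fx,\dots,f^{n-1}x$ exponentially. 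Since $F$ is $\b$-H\"older, a standard estimate shows $\|F^n_p(F^n_x)^{-1}-\Id\|$ can be made as small as we wish by requiring $f^n x$ close enough to $x$; in particular $\|F^n_p\|$, $\|(F^n_p)^{-1}\|$ are comparable to $\|F^n_x\|$, $\|(F^n_x)^{-1}\|$ with a uniform constant. If one can deduce that $\la_+(F,p)\ge\la_+(F,\mu)-\e$ and $\la_-(F,p)\le\la_-(F,\mu)+\e$ \emph{for one and the same $p$}, then the hypothesis $\la_+(F,p)-\la_-(F,p)\le\gamma$ forces $\la_+(F,\mu)-\la_-(F,\mu)\le\gamma+2\e$, and $\e\to0$ finishes the first assertion.

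\emph{The main obstacle.} I expect essentially all the difficulty to lie in the step just italicized. The quantities $\la_\pm(F,p)$ are governed by the \emph{spectral radii} of $F^n_p$ and $(F^n_p)^{-1}$, while the closing lemma only controls their \emph{norms}, and the norm of a matrix can be far larger than its spectral radius (think of a large shear). This is exactly what the periodic approximation of Lyapunov exponents in \cite{Ka} overcomes, and I would invoke that result. The mechanism is that Oseledets regularity, together with continuity of the splitting along $\Gamma\ni x,f^nx$, forces $F^n_x$ to act \emph{almost conformally} on its top and bottom Oseledets subspaces, with scaling factors $e^{(\la_\pm(F,\mu)\pm\e)n}$; the closed-up map $F^n_p$ then has invariant subspaces close to these --- here one uses the domination coming from the spectral gap $\la_+(F,\mu)>\la_-(F,\mu)$, and if this gap is zero there is nothing to prove --- on which $F^n_p$ is itself almost conformal, and an almost conformal linear map has spectral radius comparable to its norm. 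Hence $\rho(F^n_p)\ge e^{(\la_+(F,\mu)-\e)n}$ and $\rho((F^n_p)^{-1})\ge e^{(-\la_-(F,\mu)-\e)n}$, which are the bounds needed above.

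\emph{From ergodic measures to the uniform bound.} This last step is soft. Set $a_n=\sup_{x\in\M}\log K_F(x,n)$, a supremum attained because $K_F(\cdot,n)$ is continuous on the compact $\M$. From $F^{m+n}_x=F^m_{f^nx}\circ F^n_x$ one gets $K_F(x,m+n)\le K_F(x,n)\,K_F(f^nx,m)$, hence $a_{m+n}\le a_m+a_n$, so $\kappa:=\lim_n a_n/n=\inf_n a_n/n$ exists. Since $K_F(x,-n)=K_F(f^{-n}x,n)$, the estimate \eqref{K(x,n) gamma} for all $x\in\M$ and $n\in\Z$ is equivalent to $\kappa\le\gamma$ (if $\kappa\le\gamma$ then $a_n\le(\gamma+\e)n$ for $n\ge N_\e$, and the remaining finitely many $a_n$ are absorbed into $C_\e$). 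To see $\kappa\le\gamma$, pick $x_n$ realizing $a_n$ and let $\nu$ be a weak-$*$ limit of a subsequence of the empirical measures $\frac1n\sum_{k=0}^{n-1}\delta_{f^kx_n}$; then $\nu$ is $f$-invariant, and splitting each of $\log\|F^n_{x_n}\|$ and $\log\|(F^n_{x_n})^{-1}\|$ into sums over blocks of a fixed length $q$, using subadditivity and the continuity of $x\mapsto\log\|F^q_x\|$ and $x\mapsto\log\|(F^q_x)^{-1}\|$, gives along the subsequence $\kappa\le\frac1q\int_\M\log K_F(x,q)\,d\nu(x)$ for every $q$. Letting $q\to\infty$ and invoking Kingman's theorem and the ergodic decomposition $\nu=\int\nu_\omega\,d\tau(\omega)$ yields $\kappa\le\int\bigl(\la_+(F,\nu_\omega)-\la_-(F,\nu_\omega)\bigr)\,d\tau(\omega)$, which is $\le\gamma$ by the first assertion; equivalently, this is the subadditive variational principle $\kappa=\sup\{\la_+(F,\mu)-\la_-(F,\mu):\mu\text{ ergodic}\}$.
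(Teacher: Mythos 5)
Your proof is correct, and for the first assertion it takes the same route as the paper: invoke Kalinin's periodic approximation of Lyapunov exponents \cite[Theorem 1.4]{Ka}, noting that a transitive Anosov diffeomorphism has the closing property and that the result extends to H\"older cocycles on H\"older bundles. Your lengthy sketch of \emph{why} that theorem is true (Lusin set, recurrence, closing, spectral radius vs.\ norm, almost-conformality on Oseledets blocks) is faithful in outline but unnecessary once you have decided to cite it as a black box; the paper simply cites it.

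For the uniform bound \eqref{K(x,n) gamma} your route genuinely differs from the paper's. The paper applies the Rodriguez Hertz criterion \cite[Proposition 3.4]{RH} to $a_n(x)=\log K_F(x,n)-(\gamma+\e)n$: one verifies the subadditive cocycle inequality \eqref{3.1} and an additional ``approximate superadditivity'' condition \eqref{3.2} via auxiliary functions $b_n(x)=\log K_F(x,n)+(\gamma+\e)n$ (this second condition, trivial for $K_F$ because $K_F(x,n+k)\ge K_F(x,n)/K_F(f^nx,k)$, is what that proposition needs beyond plain subadditivity), checks the measure-theoretic hypothesis via the Subadditive Ergodic Theorem and the first assertion, and concludes $a_N<0$ for some $N$; a short splitting-into-blocks argument then upgrades this to all $n$. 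You instead set $a_n=\sup_x\log K_F(x,n)$, use Fekete subadditivity to define $\kappa=\lim a_n/n=\inf a_n/n$, and re-derive the subadditive variational principle directly: choose maximizers $x_n$, pass to a weak-$*$ limit $\nu$ of empirical measures, average over block offsets to get $\kappa\le\frac1q\int\log K_F(\cdot,q)\,d\nu$ for every $q$, and then apply Kingman plus the ergodic decomposition to bound this by $\sup\{\lambda_+(F,\mu)-\lambda_-(F,\mu):\mu\text{ ergodic}\}\le\gamma$. Both arguments are sound. The paper's is shorter because it outsources the variational principle to a citation (at the cost of having to verify the slightly awkward $b_n$-condition); yours is self-contained and avoids that condition entirely, at the cost of redoing a standard but nontrivial block-averaging and weak-$*$ limit argument. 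Your final step (passing from $\kappa\le\gamma$ to $K_F(x,n)\le C_\e e^{(\gamma+\e)|n|}$ for all $n\in\Z$, absorbing finitely many $n$ into $C_\e$ and using $K_F(x,-n)=K_F(f^{-n}x,n)$) is also fine.
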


We can apply this proposition to the case when at each  periodic 
point $p$ there is only one Lyapunov exponent, i.e. all eigenvalues of 
$F^n_p:\E_p\to \E_p$ are of the same modulus. In this case we see 
that for any ergodic invariant measure for $f$, the cocycle 
$F$ has only one  Lyapunov exponent and \eqref{K(x,n) gamma}
is satisfied with $\gamma =0$.

\vskip.1cm

In the proposition below,  $\kappa$ is the exponent
in the Anosov condition \eqref{anosov} for $f$,
and $\beta$ is a H\"older exponent for $F$ in \eqref{beta}.
We show that under sufficient pinching, the iterates of the cocycle 
at the points on the same local stable manifold $W^s_{loc}$
remain close. The same holds for the inverse map
and the points on the same local unstable manifold $W^u_{loc}$.
To consider the compositions $(F^n_x)^{-1} \circ F^n_y$
and $(F^{-n}_x)^{-1} \circ F^{-n}_y$  we identify 
$\E_x$ and $\E_y$ for $y$ close to  $x$ using local coordinates. 
This identification is H\"older.

\begin{proposition} \label{close to Id}  
Suppose that  for some  $0 < \e < \kappa \beta /3$ there exists
$C_{\e}$ such that 
\begin{equation} \label{eq K(x,n)}
    K_F(x,n)   \le C_{\e} e^{\e |n|} \quad\text{for all }x\in \M 
    \text{ and  }n\in \Z.
\end{equation}
Then there exist $C>0$ and $\delta_0>0$ such that for any 
$\delta<\delta_0$ and $n\in \N$
\vskip.1cm

\begin{itemize}
\item[(a)] for any $x\in \M$ and $y\in W^s_{loc}(x)$  with 
$ \dist\,(x,y) \le \delta$ we have 
$$
 \|(F^n_x)^{-1} \circ F^n_y - \Id \,\| \leq C\delta^{\beta};
$$
\item[(b)]  for any $x\in \M$ and $y\in W^u_{loc}(x)$  with 
$ \dist\,(x,y) \le \delta$
we have 
$$
 \|(F^{-n}_x)^{-1} \circ F^{-n}_y - \Id \,\| \leq C\delta^{\beta}.
 $$
\end{itemize}
\end{proposition}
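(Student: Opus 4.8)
The plan is to prove part (a) by a telescoping estimate along the stable orbit and then to deduce part (b) by applying (a) to the cocycle $F^{-1}$ over $f^{-1}$: the latter is again a H\"older continuous linear cocycle over a transitive $C^2$ Anosov diffeomorphism, its unstable manifolds are the stable manifolds of $f^{-1}$, and $K_{F^{-1}}(x,n)=K_F(f^{-n}x,n)$ still satisfies \eqref{eq K(x,n)}. So I concentrate on (a). Fix $x\in\M$ and $y\in W^s_{loc}(x)$ with $\dist(x,y)\le\delta$, and choose $\delta_0$ small enough that the forward orbit segments of $x$ and $y$ remain in domains of local trivializations of $\E$, in which nearby fibers are identified by H\"older coordinate changes; write $F^k_z=F_{f^{k-1}z}\circ\cdots\circ F_z$ with generator $F_z:\E_z\to\E_{fz}$, which satisfies $\|F_z^{\pm1}\|\le C_0$ by compactness and continuity. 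Set $H_k:=(F^k_x)^{-1}\circ F^k_y:\E_y\to\E_x$, so $H_0=\Id$. Using $F^k_y=F^k_x\,H_k$ one gets the recursion
\[
H_{k+1}-H_k=(F^k_x)^{-1}\bigl((F_{f^kx})^{-1}F_{f^ky}-\Id\bigr)F^k_x\, H_k ,
\]
and since $(F_{f^kx})^{-1}F_{f^ky}-\Id=(F_{f^kx})^{-1}(F_{f^ky}-F_{f^kx})$ the norm of the middle factor conjugated by $F^k_x$ is at most $\|(F^k_x)^{-1}\|\,\|F^k_x\|\cdot C_0\|F_{f^ky}-F_{f^kx}\|=K_F(x,k)\cdot C_0\|F_{f^ky}-F_{f^kx}\|$. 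The essential point is that conjugating the one-step error by $F^k_x$ produces exactly the quasiconformal distortion $K_F(x,k)$, which is controlled by \eqref{eq K(x,n)}, rather than the individual norms $\|F^k_x\|$ and $\|(F^k_x)^{-1}\|$, which are not.

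Next I combine three ingredients: the H\"older continuity \eqref{beta} of $F$ gives $\|F_{f^ky}-F_{f^kx}\|\le C_1\dist(f^kx,f^ky)^\beta$; the Anosov condition \eqref{anosov} gives the stable contraction $\dist(f^kx,f^ky)\le C_2 e^{-\kappa k}\dist(x,y)\le C_2 e^{-\kappa k}\delta$; and \eqref{eq K(x,n)} gives $K_F(x,k)\le C_\e e^{\e k}$. Hence
\[
\|H_{k+1}-H_k\|\le C_3\, e^{(\e-\kappa\beta)k}\,\delta^\beta\,\|H_k\| .
\]
Because $\e<\kappa\beta/3<\kappa\beta$ the exponent is negative, so $\Theta:=C_3\sum_{k\ge0}e^{(\e-\kappa\beta)k}<\infty$; then $\|H_n\|\le\prod_{k<n}\bigl(1+C_3 e^{(\e-\kappa\beta)k}\delta^\beta\bigr)\le e^{\Theta\delta^\beta}\le 2$ as soon as $\delta_0$ is small, and therefore $\|H_n-\Id\|\le\sum_{k<n}\|H_{k+1}-H_k\|\le 2\Theta\,\delta^\beta=:C\delta^\beta$, uniformly in $n$, which is (a).

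The conceptual content of the argument is simply that the relative pinching rate $\e$ is beaten by the $\beta$-th power of the stable contraction rate, so that the series above is geometric. The step that really requires care --- and the reason for the comfortable margin $\e<\kappa\beta/3$ rather than merely $\e<\kappa\beta$ --- is the bookkeeping of the fiber identifications: one must verify that the H\"older identifications of nearby fibers used along the two orbit segments (including the identification of $\E_{f^ny}$ with $\E_{f^nx}$ implicit in the composition $(F^n_x)^{-1}\circ F^n_y$, and the ones implicit in the one-step errors) have uniform H\"older constants and compose consistently, so that the extra errors they introduce are again $O(\delta^\beta)$ and get absorbed into the estimate above. This is the main, if routine, obstacle; the rest is the telescoping computation just sketched.
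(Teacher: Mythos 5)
Your proof is correct, and in fact gives a slight strengthening: the argument as you have written it only needs $\e<\kappa\beta$ rather than $\e<\kappa\beta/3$.

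The telescoping identity you derive is the same one the paper uses: unrolling your recursion gives $H_n=\Id+\sum_{k=0}^{n-1}(F^k_x)^{-1} r_k\, F^k_x H_k$ where $r_k=(F_{f^kx})^{-1}F_{f^ky}-\Id$, which matches the paper's $\Id+\sum_{i=0}^{n-1}(F^i_x)^{-1} r_i\, F^i_y$ since $F^i_y=F^i_x H_i$. The difference lies entirely in how this sum is estimated. The paper bounds each term by $\|(F^i_x)^{-1}\|\cdot\|r_i\|\cdot\|F^i_y\|$, which mixes operator norms along two distinct orbits; to control $\|(F^i_x)^{-1}\|\cdot\|F^i_y\|$ they construct Lyapunov-type adapted metrics along the orbit of $x$ (Lemma~\ref{metrics}) and then compare the two orbits in these metrics (Lemma~\ref{F_i}), obtaining $\|(F^i_x)^{-1}\|\cdot\|F^i_y\|\le C_3 e^{3i\e}$. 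It is the factor $3$ in that exponent — an artifact of the adapted-metric construction, where the weight $e^{3|m|\e}$ is needed to make the defining series converge — that forces the hypothesis $\e<\kappa\beta/3$. Your estimate keeps the conjugation at the single point $x$, so the relevant product of norms is exactly $\|(F^k_x)^{-1}\|\cdot\|F^k_x\|=K_F(x,k)$, which is controlled directly by the hypothesis; the remaining factor $\|H_k\|$ is then handled by the Gr\"onwall-type product bound. This bypasses both auxiliary lemmas and improves the constant to $\kappa\beta$. Part (b) via $F^{-1}$ over $f^{-1}$ is the same reduction the paper makes.

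One small inaccuracy in your closing remarks: the fiber-identification bookkeeping is not what costs the factor $3$ in the paper. The H\"older condition \eqref{beta} is already stated for $F_x, F_y$ viewed as matrices in local coordinates, so the identification errors enter only at the level of uniform constants in $\|r_k\|\le C_0\,\dist(f^kx,f^ky)^\beta$, with no exponential loss. The $1/3$ in the paper's hypothesis is purely a byproduct of the adapted-metric construction, which your argument avoids.
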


Next we establish continuity of measurable invariant conformal
structures and sub-bundles.  In our statements,
we consider  ergodic $f$-invariant  measures 
on $\M$ with full support and local product structure.
Examples include the measure of maximal entropy, and more 
generally Gibbs (equilibrium) measures of H\"older continuous 
potentials.
A measure $\mu$ has local product structure if it is locally
equivalent to the product of its conditional measures on the
local stable and unstable manifolds. 

\begin{proposition} \label{structure} 
Suppose that $F$ satisfies the conclusion of Proposition 
\ref{close to Id}, and $\mu$ is an ergodic $f$-invariant  measure 
on $\M$ with full support and local product structure.
Then any $F$-invariant measurable  conformal structure on $\E$ 
defined $\mu$ almost everywhere is H\"older continuous with 
exponent $\beta$. 

\end{proposition}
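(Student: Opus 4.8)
The plan is to exploit the local product structure of $\mu$ together with the closeness estimates of Proposition~\ref{close to Id} to show that the measurable conformal structure $\tau$, defined $\mu$-a.e., agrees $\mu$-a.e. with a genuinely H\"older continuous conformal structure. First I would fix a point $x_0$ in the support where $\tau$ is defined and is a Lebesgue density point in the Luzin sense: by Luzin's theorem there is a compact set $\Lambda$ with $\mu(\Lambda)>0$ on which $\tau$ is uniformly continuous, and I may assume $x_0\in\Lambda$ and $\mu(\Lambda\cap B(x_0,r))/\mu(B(x_0,r))\to 1$. The key point is that a conformal structure on a fiber is an element of the symmetric space $GL(d,\R)/(\R^+\cdot O(d))$, which carries a natural $GL(d,\R)$-invariant metric, and the action of a linear map $L$ on this space moves points by a bounded amount controlled by $\log K(L)=\log(\|L\|\cdot\|L^{-1}\|)$; in particular an isometry ($K=1$) fixes points. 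So part~(a) of Proposition~\ref{close to Id} says that $(F^n_x)^{-1}\circ F^n_y$ acts on the conformal-structure space almost trivially when $y\in W^s_{loc}(x)$, with displacement bounded by $C\delta^\beta$ uniformly in $n$.

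Next I would run the standard holonomy argument. Since $\tau$ is $F$-invariant, for $\mu$-a.e.\ $x$ and $\mu$-a.e.\ $y$ the values $\tau(x)$ and $\tau(y)$ are related by push-forward under the cocycle along orbit segments; combined with the uniform closeness from Proposition~\ref{close to Id}, invariance of $\tau$ propagates $\tau(x)$ to $\tau(y)$ up to an error $C\dist(x,y)^\beta$ whenever $y\in W^s_{loc}(x)$ and both are in $\Lambda$. Concretely: if $x,y\in\Lambda$ with $y\in W^s_{loc}(x)$ and $\dist(x,y)\le\delta$, then $F^n_x\tau(x)=\tau(f^nx)$ and $F^n_y\tau(y)=\tau(f^ny)$, and since $f^nx,f^ny$ stay close (Anosov contraction on $W^s_{loc}$) while $\tau$ is uniformly continuous on $\Lambda$, one gets that $\tau(f^nx)$ and $\tau(f^ny)$ are close for a sequence of $n$ along which $f^nx,f^ny$ return to $\Lambda$; pulling back via $(F^n_x)^{-1}$ and using that $(F^n_x)^{-1}\circ F^n_y$ is $C\delta^\beta$-close to the identity on conformal-structure space gives $\dist(\tau(x),\tau(y))\le C'\delta^\beta$. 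The symmetric argument with part~(b) handles $y\in W^u_{loc}(x)$. Then the local product structure of $\mu$ lets me combine these two estimates: any two nearby points in a full-measure set are joined by a stable-then-unstable path of comparable length, so $\tau$ restricted to that full-measure set is H\"older with exponent $\beta$ on a neighborhood of $x_0$, hence extends to a genuinely H\"older conformal structure $\bar\tau$ on a neighborhood, and $\bar\tau=\tau$ $\mu$-a.e.\ there. Finally, $F$-invariance of $\bar\tau$ on its domain and transitivity (density of a single orbit, plus full support of $\mu$) propagate $\bar\tau$ to all of $\M$ as a continuous, and then H\"older, $F$-invariant conformal structure equal to $\tau$ $\mu$-a.e.

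The main obstacle I anticipate is the bookkeeping in the holonomy/return-time step: one must choose the sequence of times $n_k$ so that simultaneously $f^{n_k}x$ and $f^{n_k}y$ both lie in the Luzin set $\Lambda$ (so that uniform continuity of $\tau$ applies) and the closeness estimates of Proposition~\ref{close to Id} are in force; this requires a Borel–Cantelli / ergodic-theorem argument (e.g.\ using that $\mu$-a.e.\ point returns to $\Lambda$ with positive frequency, and that for $\mu\times\mu$-a.e.\ pair on a local stable leaf the conditional measures give positive return frequency to $\Lambda$), and it is here that the hypothesis of local product structure and full support is essential rather than cosmetic. A secondary technical point is making precise the claim that the displacement of a conformal structure under a linear map is controlled by its quasiconformal distortion, i.e.\ choosing the right invariant metric on the space of conformal structures and checking that "$\Id$-close in operator norm" implies "close in that metric"; this is elementary but must be stated carefully since it is the bridge between Proposition~\ref{close to Id} and the continuity conclusion.
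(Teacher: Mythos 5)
Your proposal follows essentially the same route as the paper's proof: a Lusin set on which $\tau$ is uniformly continuous and bounded, pull-back of $\tau$ along orbits via the isometric action of $(F^n_x)^*$ on the space of conformal structures, the closeness estimate of Proposition~\ref{close to Id} converted (via a lemma quantifying displacement of a conformal structure under a near-identity map, the paper's Lemma~\ref{A*}) into a H\"older bound along stable leaves, the symmetric argument along unstable leaves, and the local product structure to concatenate; the paper realizes your "simultaneous returns" step cleanly by choosing $\mu(S)>1/2$ so that Birkhoff gives both orbit segments frequency $>1/2$ in $S$, forcing infinitely many common return times. The only cosmetic differences are that the paper does not localize near a density point (it gets the H\"older estimate directly on a full-measure invariant set and extends by density of $\operatorname{supp}\mu=\M$) and does not need orbit transitivity for the extension.
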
 

It is not known in general  whether any measurable 
invariant conformal structure is continuous. Some results were 
established when the conformal structure is bounded \cite{S}
or belongs to $L^p$ for sufficiently large $p$ \cite{LS}. 

Combining Propositions \ref{K(x,n)}, \ref{close to Id}, and \ref{structure}
we see that if at each periodic point there is only one Lyapunov exponent, 
or if the largest and the smallest exponents are sufficiently close, 
then  any $F$-invariant measurable conformal structure on $\E$ 
 is H\"older continuous. 
\vskip.1cm

 We recall that a cocycle $F$ is said to be uniformly quasiconformal 
 if  the quasiconformal distortion $K_F(x,n)$ is uniformly bounded 
 for all $x\in \M$ and $n\in\Z$, see Section \ref{prelim qc} for details.
In the next proposition we apply  observations made 
by D. Sullivan \cite{Su} and P. Tukia \cite{T} for quasiconformal 
group actions to our case. We state this result in greater generality 
than our standing assumptions. We note that the converse statement 
is also true.

\begin{proposition} \label{bounded measurable}  
Let $f$ be a diffeomorphism of a compact manifold $\M$ and let
$F : \E \to \E$ be a continuous linear cocycle over $f$. If $F$ is uniformly 
quasiconformal then it  preserves a bounded measurable conformal 
 structure $\tau$ on $\E$.
\end{proposition}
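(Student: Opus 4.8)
The plan is to build the invariant conformal structure by an averaging (barycenter) argument on the space of conformal structures, exploiting the fact that uniform quasiconformality forces all iterates $F^n_x$ to act with uniformly bounded distortion, hence to move any given conformal structure only a bounded amount. Recall that a conformal structure on the fiber $\E_x$ can be identified with a point in the symmetric space $\mathcal{S}_d = SL(d,\R)/SO(d)$, which carries a natural nonpositively curved (CAT(0)) metric $d_{\mathcal S}$; the action of $GL(d,\R)$ on $\mathcal S_d$ is by isometries up to the scalar part, and crucially the displacement $d_{\mathcal S}(g\cdot \sigma,\sigma)$ is controlled by $\log\big(\|g\|\,\|g^{-1}\|\big)$. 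Fix once and for all a continuous Riemannian metric on $\E$, giving a reference conformal structure $\sigma_0(x)\in\mathcal S(\E_x)$ at every point; this is the ``origin'' from which we measure.

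First I would fix an $f$-invariant ergodic (or merely an $f$-invariant) probability measure $\mu$ on $\M$ --- any Borel probability measure suffices if we do not need ergodicity, but using the Birkhoff/Krylov--Bogolyubov setup is cleanest --- and consider, for each $x$ and each $N$, the orbit segment $f^{-N}x,\dots,f^{N}x$. Pushing the reference structures forward by the cocycle, $F^n_{f^{-n}x}\cdot \sigma_0(f^{-n}x)$ lands in $\mathcal S(\E_x)$ and, by uniform quasiconformality, lies in the ball of radius $R=\tfrac12\log C_F$ about $\sigma_0(x)$, where $C_F$ is the uniform bound on $K_F$. Thus for every $x$ the set of structures obtained by transporting nearby and far structures along $F$ is a bounded subset of the CAT(0) space $\mathcal S(\E_x)$. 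On a complete CAT(0) space every bounded set has a unique circumcenter (the center of the smallest enclosing ball); I would define $\tau(x)$ to be the circumcenter of the closure of $\{\,F^n_{f^{-n}x}\cdot\sigma_0(f^{-n}x):n\in\Z\,\}$ in $\mathcal S(\E_x)$. Because this generating set at $fx$ is exactly $F^1_x$ applied to the generating set at $x$ (the orbit shifts by one), and $F^1_x$ acts as an isometry of the symmetric-space metric, uniqueness of the circumcenter forces $F^1_x\cdot\tau(x)=\tau(fx)$, i.e.\ $\tau$ is $F$-invariant. Boundedness of $\tau$ (measured against $\sigma_0$, which is continuous hence bounded on the compact base) is immediate from $d_{\mathcal S}(\tau(x),\sigma_0(x))\le R$.

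It remains to check measurability of $x\mapsto \tau(x)$. Here I would argue that for each fixed $n$ the map $x\mapsto F^n_{f^{-n}x}\cdot\sigma_0(f^{-n}x)$ is Borel (indeed continuous, since the cocycle and the reference metric are continuous), that the map sending a closed bounded subset of a CAT(0) space to its circumcenter is continuous in the Hausdorff topology, and that the countable family indexed by $n\in\Z$ assembles into a measurable ``set-valued'' map whose closure and circumcenter are therefore measurable; alternatively, one truncates to the sets $B_N(x)=\overline{\{F^n_{f^{-n}x}\cdot\sigma_0(f^{-n}x):|n|\le N\}}$, whose circumcenters $\tau_N$ are continuous in $x$, and observes $\tau_N(x)\to\tau(x)$ pointwise, so $\tau$ is a pointwise limit of continuous maps.

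The main obstacle I anticipate is the measurability bookkeeping rather than any deep geometric input: one must be careful that the closure of a countable union of continuous sections is genuinely a measurable field of bounded sets in the bundle of symmetric spaces over $\M$, and that the circumcenter operation commutes appropriately with this. A second, smaller technical point is verifying that the $GL(d,\R)$-action on $\mathcal S_d$ descends to an honest isometric action on the conformal-structure space (one quotients out homotheties, which act trivially on conformal structures), so that the invariance identity $F^1_x\cdot\tau(x)=\tau(fx)$ holds exactly and not merely up to scale --- but this is precisely the observation of Sullivan and Tukia that the proposition invokes. Once these are in place, $\tau$ is the desired bounded $F$-invariant measurable conformal structure. \pf
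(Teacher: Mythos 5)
Your proposal is essentially the paper's own proof. The paper fixes a continuous reference structure $\tau_0$, forms $S(x)=\{(F^n_x)^*(\tau_0(f^n x)):n\in\Z\}$, uses uniform quasiconformality to bound its diameter, takes the circumcenter in the nonpositively curved space $\c(x)$ (invoking Sullivan--Tukia), gets $F$-invariance from uniqueness of the circumcenter under the isometric shift, and gets Borel measurability exactly as you do, by writing $\tau$ as a pointwise limit of the continuous circumcenters of the truncated sets $S_k(x)$. Your set $\{F^n_{f^{-n}x}\cdot\sigma_0(f^{-n}x):n\in\Z\}$ is the same set as $S(x)$ once one notes that pushing forward by $F^n_{f^{-n}x}$ is pulling back by $F^{-n}_x$; this is a cosmetic difference. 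One superfluous ingredient in your write-up: no invariant measure is needed at all -- the construction is pointwise on $\M$ and produces a Borel section, which is then measurable with respect to any Borel measure one cares about -- so the Krylov--Bogolyubov remark can be dropped.
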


Under our standing assumptions,  Proposition \ref{structure} 
implies that  $\tau$ is H\"older continuous.
We can normalize it by a  H\"older continuous
function on $\M$ to obtain a Riemannian metric with respect to which
$F$ is conformal, which yields  the following corollary.

\begin{corollary} \label{qc implies conf}
If $F$ is uniformly quasiconformal then it preserves a 
H\"older continuous conformal structure on $\E$, equivalently,
$F$ is conformal with respect to a H\"older continuous 
Riemannian metric on $\E$.

\end{corollary}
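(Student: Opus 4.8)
The plan is to obtain the corollary by feeding Proposition~\ref{bounded measurable} into Proposition~\ref{structure}, after checking that uniform quasiconformality supplies the pinching hypothesis needed along the way. First, if $F$ is uniformly quasiconformal there is a constant $C_0$ with $K_F(x,n)\le C_0$ for all $x\in\M$ and $n\in\Z$. Since $C_0\le C_0\,e^{\e|n|}$ for every $\e\ge 0$, the hypothesis \eqref{eq K(x,n)} of Proposition~\ref{close to Id} holds with, say, any $\e\in(0,\kappa\beta/3)$ and $C_\e=C_0$; hence $F$ satisfies the conclusion of Proposition~\ref{close to Id}.

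Next, I would apply Proposition~\ref{bounded measurable}, whose hypotheses are implied by our standing assumptions, to obtain an $F$-invariant (bounded) measurable conformal structure $\tau$ on $\E$, defined on all of $\M$. Fix an ergodic $f$-invariant measure $\mu$ with full support and local product structure; the measure of maximal entropy of the transitive Anosov diffeomorphism $f$ is such a measure (cf. the discussion preceding Proposition~\ref{structure}). By the previous paragraph $F$ satisfies the conclusion of Proposition~\ref{close to Id}, so Proposition~\ref{structure} applies and shows that $\tau$ agrees $\mu$-almost everywhere with a conformal structure that is H\"older continuous with exponent $\beta$. Because $\mathrm{supp}\,\mu=\M$, this H\"older representative is uniquely determined, is defined on all of $\M$, and is $F$-invariant by continuity of $F$ on a dense set; I keep the name $\tau$ for it.

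Finally, one passes from the H\"older $F$-invariant conformal structure $\tau$ to a metric. Fixing an auxiliary continuous Riemannian metric on $\E$ and taking, at each point, the unique representative of $\tau$ with determinant one relative to the corresponding orthonormal frames produces a H\"older continuous Riemannian metric $g$ in the class $\tau$; any other representative of $\tau$ differs from $g$ by a positive H\"older function on $\M$. The $F$-invariance of $\tau$ says exactly that each $F_x\colon(\E_x,g_x)\to(\E_{fx},g_{fx})$ is a conformal linear map, i.e. $F_x^{*}g_{fx}=\la(x)\,g_x$ with $\la>0$ H\"older continuous; that is, $F$ is conformal with respect to $g$. Conversely, conformality of $F$ with respect to a continuous metric forces $K_F(x,n)$ to be bounded by compactness of $\M$, which gives the equivalence asserted in the corollary.

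The genuine content is entirely in the earlier results: Proposition~\ref{bounded measurable} produces \emph{some} invariant conformal structure, and Proposition~\ref{structure} --- through Proposition~\ref{close to Id} --- upgrades it to H\"older regularity. In the present argument I do not expect a real obstacle; the only things to verify are that a uniform bound on $K_F$ trivially satisfies \eqref{eq K(x,n)}, that a measure with full support and local product structure exists, and the routine matters of extending the almost-everywhere-defined structure to all of $\M$ and of choosing a normalized metric representative.
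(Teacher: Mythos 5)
Your proof is correct and follows the same route as the paper: uniform quasiconformality trivially verifies the hypothesis of Proposition~\ref{close to Id}, Proposition~\ref{bounded measurable} yields a bounded measurable invariant conformal structure, Proposition~\ref{structure} (applied with the measure of maximal entropy) upgrades it to H\"older continuity, and normalizing produces the metric. One small remark: the ``equivalently'' in the corollary refers to the standard identification between preserving a conformal structure and being conformal with respect to a representative metric (as noted in Section~\ref{prelim qc}), rather than to the converse implication ``conformal $\Rightarrow$ uniformly quasiconformal'' that you address at the end, though that observation is of course also true.
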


This corollary and Propositions \ref{K(x,n)} and \ref{close to Id}
enable us to prove Theorem \ref{periodic}.

\vskip.1cm

Now we address continuity of measurable invariant sub-bundles.
Note that the assumptions in the next proposition are stronger 
than those in Proposition \ref{structure}.
However, they are satisfied if at each periodic point there is only one 
Lyapunov exponent.

\begin{proposition} \label{distribution} 
Suppose that for any $\e>0$ there exists $C_{\e}$ 
such that 
  $$
    K_F(x,n)  \le C_{\e} e^{\e |n|} \quad \text{for all }x\in\M
    \text{  and  }n\in\Z.
  $$
Then any measurable $F$-invariant sub-bundle in $\E$  defined 
almost everywhere with respect to a measure with local product 
structure and full support is H\"older continuous.
\end{proposition}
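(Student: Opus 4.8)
The plan is to deduce H\"older continuity of the invariant sub\-bundle $V$ from its invariance under the stable and unstable holonomies of $F$; the latter will be forced by the subexponential bound on $K_F$, which is equivalent to saying that $F$ has a single Lyapunov exponent over every ergodic invariant measure for $f$.

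\emph{Step 1: reduction and holonomies.} If $\dim V=k$, then $\wedge^k V$ is a rank\-one $(\wedge^k F)$\-invariant sub\-bundle of $\wedge^k\E$, and since $\|\wedge^k F^n_x\|\cdot\|(\wedge^k F^n_x)^{-1}\|\le K_F(x,n)^k$, the cocycle $\wedge^k F$ again satisfies the hypothesis; H\"older continuity of the line field $\wedge^k V$ then gives that of $V$ via the Pl\"ucker embedding. (This reduction to a measurable ``eigenline'' is convenient but not essential -- one may work directly in the Grassmann bundle $Gr_k(\E)$.) Next, fixing $\e<\kappa\beta/3$ the hypothesis gives \eqref{eq K(x,n)}, so Proposition \ref{close to Id} applies. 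Combining its estimate with the H\"older continuity of $F^{\pm1}$ and the contraction $\dist(f^n x,f^n y)\le C\kappa^n\dist(x,y)$ along stable manifolds -- which dominates the subexponential factor $e^{\e n}$ produced by $K_F$ in the telescoping sum $\sum_n\bigl((F^{n+1}_x)^{-1}\!\circ F^{n+1}_y-(F^n_x)^{-1}\!\circ F^n_y\bigr)$ -- one obtains that
$$
H^s_{x,y}:=\lim_{n\to\infty}(F^n_x)^{-1}\!\circ F^n_y\,,\qquad y\in W^s_{loc}(x),
$$
exists, satisfies $\|H^s_{x,y}-\Id\|\le C\,\dist(x,y)^{\beta}$, depends H\"older continuously on its arguments, and obeys the cocycle relation $F^n_x\circ H^s_{x,y}=H^s_{f^n x,f^n y}\circ F^n_y$. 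The same construction applied to $F^{-1}$ along unstable manifolds produces the unstable holonomies $H^u$.

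\emph{Step 2 (the crux): $V$ is holonomy invariant.} I claim that for $\mu$\-a.e.\ $x$ and, with respect to the conditional measure of $\mu$ on $W^s_{loc}(x)$, for a.e.\ $y\in W^s_{loc}(x)$ one has $H^s_{x,y}V(x)=V(y)$, and symmetrically along unstable manifolds. Let $\Delta(x,y)$ be the distance between $H^s_{x,y}V(x)$ and $V(y)$ in the fiber of the Grassmann (resp.\ projective) bundle. Using the cocycle relation, the bound $\|H^s_{x,y}-\Id\|\le C\dist(x,y)^\beta$, and the elementary fact that a linear map $A$ changes that fiber metric by a factor at most $\|A\|\cdot\|A^{-1}\|$, one gets, for every $n$,
$$
\Delta(x,y)\ \le\ K_F(y,n)\,\Bigl(C\,\dist(f^n x,f^n y)^{\beta}\;+\;d\bigl(V(f^n x),V(f^n y)\bigr)\Bigr).
$$
Since $K_F(y,n)\le C_\e e^{\e|n|}$ for every $\e>0$, the first term in the parentheses is harmless. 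The difficulty -- and the step I expect to be the main obstacle -- is the term $d(V(f^n x),V(f^n y))$: $V$ is only measurable, so its modulus of continuity on a Lusin set need not decay fast enough to absorb the factor $e^{\e n}$, and a naive recurrence\-to\-a\-Lusin\-set argument fails. This is exactly where the full strength of the hypothesis enters: coincidence of the extremal Lyapunov exponents of $F$ means the induced action on the Grassmann bundle has all fiberwise exponents equal to zero, and an invariance\-principle argument then forces the $F$\-invariant section $V$ to be holonomy invariant. Concretely I would run this not pointwise but by averaging: push the graph of $V$ forward by $\widehat{F}^{\,n}$, disintegrate over $W^s_{loc}(f^n x)$, and use martingale / Lebesgue\-differentiation convergence of the conditional averages of the merely measurable section over the shrinking balls $f^n(W^s_{loc}(x))$ -- the vanishing of the fiber exponents guarantees the limiting section is the $H^s$\-transport of $V(f^n x)$, which with the displayed inequality yields $\Delta(x,y)=0$. (By contrast, in Proposition \ref{structure} the cocycle acts by \emph{isometries} on the space of conformal structures, so no factor $K_F$ appears in the analogous estimate and the weaker hypothesis of Proposition \ref{close to Id} alone suffices; this explains why Proposition \ref{distribution} requires more.)

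\emph{Step 3: holonomy invariance implies continuity.} Fix a point $x_0$ and a small product neighbourhood $R$ such that holonomy invariance of $V$ holds along the leaf $W^s_{loc}(x_0)$ and along $\mu$\-almost every unstable leaf meeting $R$ (possible by Step 2 and the local product structure of $\mu$). For $x\in R$ let $w=W^s_{loc}(x_0)\cap W^u_{loc}(x)$ and set
$$
\bar V(x):=H^u_{w,x}\bigl(H^s_{x_0,w}(V(x_0))\bigr).
$$
Since $H^s$, $H^u$ and the dependence of $w$ on $x$ are H\"older, $\bar V$ is a H\"older continuous sub\-bundle on $R$; by Step 2 together with a Fubini argument over local stable and unstable leaves, $\bar V=V$ $\mu$\-almost everywhere on $R$. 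Because $\mu$ has full support, $\bar V$ is the unique continuous representative, the local definitions agree on overlaps, and they patch to a globally H\"older continuous sub\-bundle of $\E$; continuity together with full support promotes its $\mu$\-a.e.\ $F$\-invariance to genuine $F$\-invariance. Since it coincides with $V$ $\mu$\-a.e., this proves the proposition.
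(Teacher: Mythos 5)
Your plan follows the same skeleton as the paper's proof: construct stable and unstable holonomies from Proposition \ref{close to Id}, note that the subexponential bound on $K_F$ makes the induced action on the Grassmann bundle have zero fiberwise Lyapunov exponents, invoke an invariance principle to force holonomy invariance of the measurable section, and then use the local product structure to build a H\"older continuous representative. You also correctly flag where the real work lies, namely holonomy invariance of $V$, and that step is exactly where your write-up has a genuine gap. The ``push the graph forward, disintegrate, and use martingale / Lebesgue-differentiation convergence'' sketch is not a proof: you never explain why the conditional averages of a merely measurable section over the shrinking sets $f^n(W^s_{loc}(x))$ converge, nor why the limit is the $H^s$-transport of $V(f^nx)$ rather than something else; a measurable section carries no usable modulus of continuity for such an averaging scheme. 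What you are gesturing at is precisely the content of the Avila--Viana invariance principle, and the paper closes the gap by citing it: it passes to the induced cocycle $\F$ on the Grassmann bundle $\G$, shows via Lemma \ref{neutral_lemma} that the fiber distortion is subexponential so $\F$ is dominated in the sense of \cite{ASV} and hence admits $\beta$-H\"older $s$- and $u$-holonomies (\cite[Proposition 4.2]{ASV}), lifts $\mu$ to the $\F$-invariant measure $m$ supported on the graph of the section, observes that $m$ has zero fiber exponents, and applies \cite[Theorem C]{AV} to obtain a continuous, holonomy-invariant disintegration $\{\tilde m_x\}$. Continuity of $\{\tilde m_x\}$, density of the full-measure set where $\tilde m_x$ is the Dirac mass at $\phi(x)$, and compactness of $\G_x$ then force every $\tilde m_x$ to be a Dirac mass, and holonomy invariance gives the $\beta$-H\"older estimate for the resulting section.

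A smaller logical wrinkle: you state Step 2 as ``$V$ is holonomy invariant for a.e.\ $x,y$'' and then, in Step 3, transport $V(x_0)$ by holonomies to define $\bar V$. But in the paper's argument holonomy invariance is not established for the original measurable section directly; rather, the invariance principle produces a \emph{new} continuous holonomy-invariant family of conditional measures which only a posteriori coincides $\mu$-a.e.\ with the atomic family attached to $V$. Your Step 3 is the right picture but becomes clean only once that continuous holonomy-invariant object is already in hand. The exterior-power reduction to a line bundle in your Step 1 is harmless but unnecessary; the paper works directly in the Grassmann bundle.
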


Combining Propositions \ref{structure} and \ref{distribution}
with Zimmer's Amenable Reduction Theorem we obtain the
following description of cocycles with slowly growing 
quasiconformal distortion. We use it in the proof of
Theorem \ref{dim 2}. 

\begin{proposition} \label{dichotomy} 
Suppose that for any $\e>0$ there exists $C_{\e}$ such that 
$K_F(x,n)  \le C_{\e} e^{\e |n|}$ for all $x\in\M$ and  $n\in\Z$.
Then either  $F$ preserves a H\"older continuous conformal structure 
on $\E$ or  $F$ preserves a H\"older continuous proper non-trivial
sub-bundle $\E'$ of $\E$  and a
H\"older continuous conformal structure on $\E'$.
\end{proposition}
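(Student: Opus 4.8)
The plan is to run Zimmer's Amenable Reduction Theorem on the cocycle $F$, exploit the fact that the only amenable algebraic subgroups of $GL(d,\R)$ that can occur are block-upper-triangular up to conformal factors, and then peel off continuity of the resulting invariant objects using Propositions \ref{structure} and \ref{distribution}. First I would fix an ergodic $f$-invariant measure $\mu$ on $\M$ with full support and local product structure — for instance the measure of maximal entropy, which exists since $f$ is transitive Anosov. Applying the Amenable Reduction Theorem to $F$ with respect to $\mu$, there is a measurable reduction of the structure group to an amenable subgroup $H$ of $GL(d,\R)$; concretely, there is a measurable $F$-invariant flag $0 = V_0 \subset V_1 \subset \cdots \subset V_k = \E$, defined $\mu$-a.e., together with measurable $F$-invariant conformal structures on each of the successive quotients $V_i/V_{i-1}$ (the blocks of an amenable group being, up to the unipotent radical, products of conformal groups). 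Here I would choose the flag to be minimal, so that $V_1$ is a sub-bundle of smallest positive dimension among the invariant sub-bundles appearing.

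Next I would split into the two cases of the dichotomy according to whether $k = 1$ or $k \ge 2$. If $k = 1$ then $\E$ itself carries a measurable $F$-invariant conformal structure; since $F$ satisfies the hypothesis $K_F(x,n) \le C_\e e^{\e|n|}$ for every $\e > 0$, it satisfies the conclusion of Proposition \ref{close to Id} (take, say, $\e < \kappa\beta/3$), so Proposition \ref{structure} applies and this conformal structure is H\"older continuous with exponent $\beta$. That is the first alternative. If $k \ge 2$, then $\E' := V_1$ is a measurable $F$-invariant sub-bundle which is proper and non-trivial. By Proposition \ref{distribution}, $\E'$ is H\"older continuous. Restricting $F$ to $\E'$ gives a H\"older continuous linear cocycle $F|_{\E'}$ over $f$, and since $K_{F|_{\E'}}(x,n) \le K_F(x,n)$ (the extreme singular values of the restriction are squeezed between those of $F$), the restricted cocycle again satisfies the slow-growth hypothesis. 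The amenable reduction provides a measurable $F$-invariant conformal structure on $V_1/V_0 = \E'$, and applying Propositions \ref{close to Id} and \ref{structure} to $F|_{\E'}$ upgrades it to a H\"older continuous one. This is the second alternative.

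The main obstacle I anticipate is the bookkeeping at the interface between the measurable output of the Amenable Reduction Theorem and the hypotheses of Propositions \ref{structure} and \ref{distribution}: one must be sure that the invariant sub-bundle and the invariant conformal structure on the first block really are genuinely measurable objects of the type those propositions accept (defined $\mu$-a.e. with $\mu$ of full support and local product structure), and that the conformal structure on $\E'$ coming from the reduction is not merely a conformal structure on an abstract quotient but can be read on the concrete sub-bundle $\E' \subset \E$. A secondary point requiring care is the verification that restricting the cocycle to a H\"older sub-bundle preserves both the H\"older continuity of the cocycle and the quasiconformal distortion bound; this is routine but must be stated. Once these identifications are in place, the two propositions do all the analytic work and the dichotomy follows immediately.
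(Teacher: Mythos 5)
Your proof is correct and follows essentially the same route as the paper: apply Zimmer's Amenable Reduction Theorem (with the block-triangular form of maximal amenable subgroups of $GL(d,\R)$) over the measure of maximal entropy, then upgrade the resulting measurable invariant sub-bundle and conformal structure to H\"older continuous ones via Propositions \ref{structure} and \ref{distribution}. The only difference is cosmetic — the paper extracts a measurable dichotomy as a separate Corollary and (a bit loosely) names the last block rather than the first as giving the invariant subspace, whereas your use of $V_1$ from the flag is the cleaner reading of the same block-upper-triangular form.
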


We note that the alternatives are not mutually exclusive.
If $\E'$ is one-dimentional then 
having a conformal structure on it becomes trivial.


\section{Preliminaries} \label{preliminaries}

\noindent In this section we briefly introduce the main notions 
used in this paper.

\subsection{Anosov diffeomorphisms}\label{anosov diffeo}

Let $f$ be a diffeomorphism of a compact Riemannian manifold $\M$. 
It is called Anosov if there exist a decomposition 
of the tangent bundle $T\M$ into two invariant 
continuous subbundles $E^s$ and $E^u$, and constants $C>0$, 
$\kappa>0$ such that for all $n\in \N$,
\begin{equation}\label{anosov}
\begin{aligned} 
  \| df^n(v) \|  &\leq C e^{-\kappa n} \| v \|
     \quad\text{for all }v \in E^s , \\
   \| df^{-n}(v) \| &\leq Ce^{-\kappa n} \| v \|
     \quad\text{for all }v \in E^u. 
\end{aligned}
\end{equation}
The distributions $E^s$ and $E^u$ are called stable and unstable. 
These distributions are tangential to the foliations 
$W^s$ and $W^u$ respectively. 
Local stable and unstable leaves $W^s_{loc}(x)$ and 
$W^u_{loc}(x)$ are the connected components of $x$ in the 
intersection of $W^s(x)$ and $W^u(x)$ with a small ball around $x$.

 
 \subsection{H\"older continuous vector bundles}\label{bundle} 
 
 Let $\M$ be a  compact smooth manifold.
 We consider  a finite dimensional  H\"older continuous vector bundle 
 $P : \E \to \M $ over $\M$.  By this we mean that there exists
an open cover $\{ U_i \}$ of $\M$ and a  system
 of local coordinates $\phi_i : P^{-1} (U_i) \to U_i \times \rd$ 
such that  the coordinate changes
 $$
\phi_j \circ \phi_i^{-1} : (U_i \cap U_j) \times \rd \to (U_i \cap U_j) 
\times \rd \qquad (x,v) \mapsto (x,L_x(v))
 $$ 
are homeomorphisms with liner automorphisms $L_x$ depending
 H\"older continuously on $x$. That is,  there exist $C, \;\beta >0$ 
 such that
 $$
 \| L_x - L_y \| \le C \cdot \dist (x,y) ^\beta 
 $$ 
 for all $i,j$ and all $x,y \in U_i \cap U_j$.

We will sometimes identify the fibers at nearby points using the local 
coordinates. We equip $\E$ with a background H\"older continuous  
Riemannian metric, i.e. a family of inner products on the fibers $\E_x$
depending H\"older continuously on $x$.


\subsection{Linear cocycles and Lyapunov exponents} \label{cocycles}
Let $f$ be a diffeomorphism of a compact smooth manifold $\M$
and $P : \E \to \M $ be a finite dimensional  H\"older continuous 
vector bundle over $\M$.
A H\"older continuous linear cocycle over $f$ is a homeomorphism
$F:\E\to\E$ such that $P \circ F = f \circ P$ and $F_x : \E_x \to \E_{fx}$
is a linear isomorphism which depends  H\"older continuously on $x$,
i.e. there exist $C, \,\beta>0$ such that 
for all nearby $x,y\in \M$,
\begin{equation}\label{beta}
\|F_x-F_y\|+\|F_x^{-1}-F_y^{-1}\| \le C \cdot \dist(x,y)^\beta. 
\end{equation}
Here $F_x$ and $F_y$ are viewed as matrices using local coordinates. 
Note that the second term on the left is not necessary for a continuous 
$F$.  Indeed, $F_x^{-1}$ is then automatically continuous in $x$ and bounded
on $\M$, so we can estimate 

$$
\|F_x^{-1}-F_y^{-1}\| =\| F_x^{-1}(F_y-F_x)F_y^{-1}\| \le
C' \cdot \|F_x-F_y\|.
$$
\vskip.1cm

We  consider the standard notion of Lyapunov exponents for such 
a cocycle $F$ (see \cite[Section 2.3]{BP} for more details). 
We emphasize that the Lyapunov exponents of $F$
are defined for vectors in the linear spaces $\E_x$.
Note that for any measure $\mu$ on $\M$ the vector bundle $\E$
is trivial on a set of full measure.
By Oseledets's Multiplicative Ergodic Theorem the Lyapunov exponents 
of $F$, as well as Lyapunov decomposition of $\E$, are defined almost 
everywhere for every ergodic $f$-invariant measure $\mu$ on $\M$; 
in particular, they are defined at every periodic point.  
We are primarily interested in the largest and the smallest
Lyapunov exponents of $\mu$ which can be defined as follows:
\begin{equation} \label{exponents}
\begin{aligned}
&\lambda_+(F,\mu)= \lambda_+(F,x)= \lim_{n \to \infty} \frac 1n \log \| F_x ^n \| 
\quad \text{for } \mu \text { almost every} \; x\in \M  , \\
&\lambda_-(F,\mu)= \lambda_- (F,x) = \lim_{n \to \infty} \frac 1n \log \| (F_x ^n)^{-1} \|^{-1} 
\quad \text{for } \mu \text { almost every} \; x\in \M ,
\end{aligned}
\end{equation}
\vskip.1cm
$$
\text { where} \qquad F^n_x = F_{f^{n-1}x} \circ ... \circ F_{fx} \circ F_x \hskip5.8cm
$$


\subsection{Conformal structures}\label{conf structure}

A conformal structure on $\R^d$, $d\geq 2$, is a class of proportional 
inner products. The space $\c^d$ of conformal structures on $\R^d$
identifies with the space of real symmetric positive definite $d\times d$ 
matrices with determinant 1, which is isomorphic to $SL(d,\R) /SO(d,\R)$. 
$GL(d,\R)$ acts transitively on $\c^d$
via
$$
X[C] = (\det X^TX)^{-1/d}\; X^T C \, X, \quad 
\text{ where } \; X\in GL(d,\R) \;\text{ and }\; C \in \c^d.
$$ 
It is known that $\c^d$ becomes a Riemannian symmetric space 
of non-positive curvature when equipped with a certain $GL(d,\R)$-invariant 
metric. The distance to the identity in this metric is given by
\begin{equation}\label{dist(id,C)}
\dist (\Id , C) = \sqrt{d}/2 \cdot \left( (\log \lambda_1)^2 + 
\dots +(\log \lambda_d)^2 \right) ^{1/2},
\end{equation}
where $\lambda_1, \dots, \lambda_d$ are the eigenvalues of $C$
(see \cite[p.327]{T} for more details and \cite[p.27]{M} for the formula).
The distance between two structures $C_1$ and $C_2$ can be computed 
as $\dist (C_1, C_2)= \dist (\Id, X[C_2])$,  where $ X[C_1]=\Id.$ 

It is easy to check the following relation between this metric 
and the operator norm 
\begin{equation}\label{compare}
\sqrt{d/8}  \cdot  \log (\|C\|\cdot \|C^{-1}\|) \le \dist (\Id, C)
\le d/2 \cdot \max\{ \log \|C\|,  \log \|C^{-1}\|\}.
\end{equation}
We also note that $\|C^{-1}\| \le \|C\|^{d-1}$. Thus a subset
of $\c^d$ is bounded with respect to this distance if and only if
it is bounded with respect to the operator norm.
We also note that on any bounded subset of $\c^d$ this 
distance is bi-Lipschitz 
equivalent to the distance induced by the operator norm on matrices.

\vskip.1cm

Let $\E$ be a H\"older continuous vector bundle over a compact manifold $\M$. 
A conformal structure on $\E_x$ is a class of proportional  inner products on $\E_x$.  
Using the background Riemannian metric on $\E$, we can identify an inner product
with a symmetric linear operator with determinant 1 as before. For each $x\in \M$, 
we denote the space of conformal structures on $\E_x$ by $\c(x)$.
Thus we obtain a bundle $\c$ over $\M$ whose 
fiber over $x$ is $\c(x)$. We equip the fibers of $\c$ with the Riemannian 
metric defined above.  A continuous (H\"older continuous, measurable) 
section of $\c$ is called a continuous  (H\"older continuous, measurable)
conformal structure on $\E$. 
A measurable conformal structure $\tau$ on $\E$ is called {\em bounded} 
if the distance between $\tau(x)$ and $\tau_0(x)$ is uniformly 
bounded on $\M$ for a continuous conformal structure $\tau_0$ 
on $\E$.    

Now, let $f$ be a diffeomorphism of $\M$ and $F : \E \to \E$ be a linear
cocycle over $f$. Then $F$ induces a natural pull-back action $F^\ast$ on 
conformal structures as follows. For a conformal structure 
$\tau(fx)\in \c(fx)$, viewed as the linear operator on $\E_{fx}$, 
$\;F^\ast_x(\tau(fx))\in \c(x)$ is given by
\begin{equation} \label{pull-back}
  F^\ast_x(\tau(fx))= \left( \det \, ((F_x)^T \circ F_x) \right)^{-1/n} 
  (F_x)^T \circ \tau(fx) \circ F_x,
\end{equation}
where $(F_x)^T:\; \E_{fx} \to \E_{x} $ denotes the conjugate 
operator of $F_x$. 
We note that $F^\ast_x: \c_{fx}\to \c_{x}$ is an isometry between 
the fibers $\c(fx)$ and $\c(x)$. 

We say that a conformal structure $\tau$ is $F${\em -invariant}\,
if $F^\ast(\tau) = \tau$.


\subsection{Uniform quasiconformality} \label{prelim qc}
Let $f$ be a diffeomorphism of a compact manifold $\M$ and 
$F : \E \to \E$ be a linear cocycle over $f$.
For $x\in \M$ and $n\in \Z$ the {\em quasiconformal distortion} 
of $F$ is defined by 
\begin{equation}\label{K_F}
K_F(x,n)=\frac{\max\,\{\,\|\,F_x^n(v)\,\| :\; v\in \E_x, \;\|v\|=1\,\}}
            {\,\min\,\{\,\|\,F_x^n(v)\,\| :\; v\in \E_x, \;\|v\|=1\,\}}
            = \|F^n_x\| \cdot \| (F^n_x)^{-1}  \|. 
\end{equation}
We say that $F$ is {\em uniformly quasiconformal}\, if $K_F(x,n)$
is uniformly bounded for all $x\in \M$ and  $n\in\Z $ . 
If $K_F(x,n)=1$ for all $x$ and $n$, then $F$ is said 
to be {\em conformal}. 

Clearly, $F$ is conformal with respect to a Riemannian 
metric on $\E$ if and only if it preserves the conformal 
structure associated with this metric. 
We note that the notion of uniform quasiconformality
does not depend on the choice of a continuous metric.
So if $F$ preserves a continuous conformal
structure on $\E$ then $F$ is uniformly quasiconformal on $\E$
with respect to any continuous metric on $\E$.
Corollary \ref{qc implies conf} shows that the converse is also true
if $f$ is a transitive Anosov diffeomorphism.


\section{Proofs}\label{proofs}

\subsection{Proof of Proposition \ref{K(x,n)}}

To show that $\lambda_+(\mu )-\lambda_-(\mu )\le \gamma$ 
for any ergodic invariant measure $\mu$ for $f$, we apply the 
following  theorem. 
\vskip.2cm

\noindent \cite[Theorem 1.4]{Ka}  {\it Let $f$ be a homeomorphism
of a compact metric space $X$ satisfying the closing property, 
let $F$ be a H\"older $GL(d,\R)$ cocycle over $f$, and let $\mu$ be  
an ergodic invariant measure for $f$. Then the Lyapunov exponents
 $\lambda_1 \le ... \le \lambda_d$ (listed with multiplicities) of $F$ with respect 
 to $\mu$ can be approximated 
by the Lyapunov exponents of $F$ at periodic points. More precisely,
for any $\e >0$ there exists a periodic point $p \in X$ for which the 
Lyapunov exponents $\lambda_1^{(p)} \le ... \le \lambda_d^{(p)}$ of $F$ 
satisfy $|\lambda_i-\lambda_i^{(p)}|<\e$ for $i=1, \dots , d$.}
\vskip.2cm

As stated in the remark after this theorem, it holds for any H\"older
continuous linear cocycle $F$.  Also, a transitive Anosov diffeomorphism
satisfies the closing property. Thus we can apply the theorem in our setup 
and immediately obtain the desired result for $\mu$.
\vskip.2cm

Now we prove the estimate for the quasiconformal distortion 
$K_F(x,n)$ using  the following result.
\vskip.2cm

\noindent \cite[Proposition 3.4]{RH}
{\it Let $f : \M \to \M$ be a continuous map of a compact metric space.
 Let $a_n : \M \to \R$, $n \geq 0$ be a sequence of continuous functions 
such that 
\begin{equation} \label{3.1}
    a_{n+k} (x) \le a_n (f^k (x)) + a_k (x) 
    \;\text{ for every }x \in \M,\;\; n, k \geq 0
\end{equation}    
 and such that there is a sequence of continuous functions $b_n$, 
 $n \geq 0$ satisfying 
\begin{equation} \label{3.2}
    a_n (x) \le a_n (f^k (x)) + a_k (x) + b_k (f^n (x))
     \;\text{ for every } x \in \M,\;\; n, k \geq 0.  
\end{equation}   
If  $\;\inf _n \left( \frac1n \int _\M a_n d \mu \right) < 0\;$ 
for every ergodic $f$-invariant measure, then there
is $N \geq 0$ such that 
$a_N (x) < 0$ for every $x \in \M$.}
\vskip.2cm

To simplify the notations we write $K(x,n)$ for $K_F(x,n)$.
For a given $\e >0$ we apply the proposition to 
 $$\;a_n(x)=\log K(x,n) - (\gamma+\e) n \quad \text{and} \quad
 b_n(x)=\log K(x,n) +(\gamma+\e) n.$$
It is easy to see from the definition of the quasiconformal 
distortion that  
$$
K(x, n+k) \le K(x,k) \cdot K(f^kx, n) \;\text{ and }\;
K(x, n+k) \ge K(x,n) \cdot (K(f^nx,k))^{-1} 
 $$
 for every $x \in \M,$ $n, k \geq 0$. It follows that 
$a_{n+k} (x) \le a_n (f^k (x)) + a_k (x)$, i.e. the functions $a_n$ 
satisfy  \eqref{3.1}, and $\, a_{n+k}(x) \geq a_n(x)-b_k(f^nx).$
Hence 
$$
   a_n(x)\leq a_{n+k}(x) + b_k(f^nx) 
   \le a_n (f^k (x)) + a_k (x) + b_k(f^nx)
$$
and we obtain
\eqref{3.2}.

Let $\mu$ be an ergodic $f$-invariant measure.
We note that since $a_n$ satisfy  \eqref{3.1}, 
the Subadditive Ergodic Theorem implies
that 
$$
 \inf _n \,\frac1n \int _\M a_n d \mu  \,= \lim _{n\to \infty} \,\frac 1n {a_n(x)}  
\quad \text{ for } \mu  \text{ almost all }  x \in \M.
$$
Using the definitions of $K(x,n)$,  $\lambda_+(F,\mu )$, and 
$\lambda_-(F,\mu )$ we obtain  that for $\mu$ almost all $x$
$$
\begin{aligned}
&\lim _{n\to \infty} \frac 1n \log {K(x,n)} =
\lim _{n\to \infty} \frac 1n \log (\|F^n_x\|\cdot \|(F^n_x)^{-1}\|) = \\
&\lim _{n\to \infty} \frac 1n \log \|F^n_x\| -
\lim _{n\to \infty} \frac 1n \log \|(F^n_x)^{-1}\|^{-1} =
\lambda_+(F,\mu )-\lambda_-(F,\mu )\le \gamma,
\end{aligned}
$$
and hence $\lim _{n\to \infty} \frac 1n {a_n(x)} \le -\e<0$  
for $\mu$ almost all  $x \in \M$.

Thus all assumptions of the proposition above are satisfied
and hence for any $\e >0$ there exists $N_\e$ such that 
$a_{N_\e}(x)<0$, i.e. $K(x,N_\e) \le e^{(\gamma+\e) N_\e}$ for all $x \in \M$. 
For any $n>0$, we write $n=mN_\e+r$, $0\le r<N_\e,$ and estimate
$$
\begin{aligned}
  & K(x,n) \le K(x,r)\cdot K(f^r(x),N_\e) \cdot K(f^{r+N_\e}x,N_\e)
  \cdots K(f^{r+(m-1)N_\e}x,N_\e) \\
  &  \le K(x,r) \cdot e^{(\gamma+\e) mN_\e}\le C_\e e^{(\gamma+\e) n}, 
  \end{aligned}
$$ 
where $C_\e = \max K(x,r)$ with the maximum taken over all $x \in \M$ 
and $1 \le r < N_\e$.  Since $K(x,n)= K(f^nx,-n)\,$ we obtain 
$ K(x,n) \le C_{\e} e^{(\gamma+\e) |n|}\,$ for all $x$  in $\M$ and $n$ in $\Z$.
$\QED$


\subsection{Proof of Proposition \ref{close to Id}} 

First we consider the case when $y\in W_{loc}^s(x)$.
Since at least one of the points $x$ and $y$ is non-periodic, 
we assume that $x$ is.
We denote $x_i=f^i(x)$ and $y_i=f^i(y)$ for $i=0,1,...,n$.
We have
 $$
 \begin{aligned}
 & (F^n_x)^{-1}\circ F^n_y= (F^{n-1}_x)^{-1}\circ \left( 
   (F_{x_{n-1}})^{-1} \circ F_{y_{n-1}}\right) \circ F^{n-1}_y \\
  & = (F^{n-1}_x)^{-1} \circ (\Id+r_{n-1}) \circ F^{n-1}_y
 = (F^{n-1}_x)^{-1}\circ F^{n-1}_y+(F^{n-1}_x)^{-1}\circ r_{n-1}
  \circ F^{n-1}_y \\
 & =...=\Id+\sum_{i=0}^{n-1} (F^{i}_x)^{-1}\circ r_{i}\circ F^i_y ,
 \quad \text{where  }(F_{x_i})^{-1} \circ F_{y_i}=\Id +r_i. 
 \end{aligned}
$$
We estimate
\begin{equation}\label{est}
  \|\Id-(F^n_x)^{-1}\circ F^n_y\| \le \sum_{i=0}^{n-1} 
  \|(F^{i}_x)^{-1}\| \cdot \|r_{i}\| \cdot \|F^i_y\|.
\end{equation}
Since $F$ is H\"older continuous with exponent $\b$, we have 
$$
  \|r_i\|= \|(F_{x_i})^{-1} \circ F_{y_i} - \Id \| 
  \le  \|(F_{x_i})^{-1}\| \cdot \| F_{y_i} - F_{x_i}\| 
  \le C_0\cdot\dist(x_i, y_i)^\beta.
$$
Since $y\in W_{loc}^s(x)$, for $\kappa$ is as in \eqref{anosov} we obtain 
\begin{equation}\label{r_i}
   \|r_i\|  \le C_0 (C_1\, \dist (x,y) e^{-\kappa i})^\beta 
   \le C_0 (C_1 \delta e^{-\kappa i})^\beta 
   \le C_2\delta^\beta e^{-\kappa \beta i}.
\end{equation}
Lemma \ref{F_i} below  shows that 
\begin{equation}\label{F^i}
 \| ( F^i_x )^{-1} \| \cdot  \|\, F^i_y\,\| \le C_3 e^{3i\e}
 \quad \text{for } i=0, \dots , n-1.
\end{equation}
Combining \eqref{est}, \eqref{r_i}, and  \eqref{F^i} we obtain
$$
  \|\Id-(F^n_x)^{-1}\circ F^n_y\|  \,\le\,
  \sum_{i=0}^{n-1} C_2\delta^\beta e^{-\kappa \beta i} \cdot C_3 e^{3i\e} 
   \,\le\, C_2C_3 \,\delta^\beta \,\sum_{i=0}^{n-1} ( e^{3\e -\kappa \beta})^i 
  \,\le\, C\delta^\beta 
 $$ 
since $\,3\e -\kappa \beta<0.$
This completes the proof for the case of $y\in W^s(x)$.

To prove (b) we observe that $F^{-1}$ satisfies the assumptions of 
the proposition. Indeed, 
$K_{F^{-1}}(x,n)=\|(F_x^{-n})\| \cdot \|(F_x^{-n})^{-1}\| =K_F(x,n).\;$
Thus we can apply (a) to $F^{-1}$, which yields (b).

It remains to prove estimate \eqref{F^i}.
To do this, we construct  special metrics on $\E_{f^k x}$ along the orbit 
of a  non-periodic point $x\in \M$. We denote $x_k=f^k(x)$, $k\in \Z$.

\begin{lemma} \label{metrics}
Let $f$ be a diffeomorphism of  a compact  
manifold $\M$,  $\E$ be  a continuous vector bundle over $\M$, 
and $F$ be a continuous linear cocycle over $f$. 
Suppose that for some $\e>0$ there exists $C_{\e}$ such that 
$K_F(x,n) \le C_{\e} e^{\e |n|}$ for all $x \in \M$ and  $n \in \Z$. 
Then for any non-periodic point $x \in \M$ there exist
metrics $\|\cdot \|_{x_k}$ on $\E_{x_k}$, $k\in \Z$, such that 
\begin{equation}\label{3e}
\frac{\max\,\{\,\|\,F^n_{x_k}(v)\,\|_{x_{k+n}} :\; v\in \E_{x_k}, \;\|v\|_{x_k}=1\,\}}
 {\,\min\,\{\,\|\,F^n_{x_k}(v)\,\|_{x_{k+n}} :\; v\in \E_{x_k}, \;\|v\|_{x_k}=1\,\}}
 \le e^{3|n|\e}   \quad\text{for all }k,n\in \Z.  
\end{equation}
Moreover, there exists a constant $M_\e$ such that
$\|v\| \le \|v\|_{x_k} \le M_\e\|v\|$ for all $k\in \Z$ and 
$v\in \E_{x_k}$, where $\|\cdot\|$ is a given continuous metric on $\E$.

\end{lemma}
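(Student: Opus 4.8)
The plan is to produce the metrics $\|\cdot\|_{x_k}$ as geometrically weighted averages of the pull-backs of the background metric along the orbit of $x$, after renormalizing the cocycle by its volume distortion. Write $d=\dim\E_x$ and, for $k,l\in\Z$, set $\rho^{(k)}_l:=|\det F^l_{x_k}|^{1/d}>0$, the absolute determinant being computed with respect to the background inner products on the fibers. The key feature of this choice — not shared by $\|F^l_{x_k}\|$ — is that the determinant is multiplicative along the cocycle: from $F^l_{x_k}=F^{l-m}_{x_{k+m}}\circ F^m_{x_k}$ one gets the exact identities
$$\rho^{(k)}_0=1\qquad\text{and}\qquad \rho^{(k)}_l=\rho^{(k+m)}_{l-m}\cdot\rho^{(k)}_m\quad\text{for all }k,l,m\in\Z.$$

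First I would record an elementary bound: if $\sigma_1\ge\dots\ge\sigma_d$ are the singular values of $F^l_{x_k}$, then $\sigma_d\le\rho^{(k)}_l\le\sigma_1$, so for any $v\in\E_{x_k}$ one has $\|F^l_{x_k}v\|/\rho^{(k)}_l\le(\sigma_1/\sigma_d)\|v\|=K_F(x_k,l)\,\|v\|\le C_\e e^{\e|l|}\|v\|$. Now define, for $u,v\in\E_{x_k}$,
$$\langle u,v\rangle_{x_k}:=\sum_{l\in\Z}e^{-3\e|l|}\bigl(\rho^{(k)}_l\bigr)^{-2}\,\langle F^l_{x_k}u,\,F^l_{x_k}v\rangle,$$
where the inner products on the right are the background ones on $\E_{x_{k+l}}$. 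By the previous estimate the $l$-th term is at most $C_\e^2 e^{-\e|l|}\|v\|^2$, so the series converges and $\langle\cdot,\cdot\rangle_{x_k}$ is a genuine inner product; let $\|\cdot\|_{x_k}$ be the associated norm. The $l=0$ summand gives $\|v\|_{x_k}\ge\|v\|$, while summing the bound gives $\|v\|_{x_k}\le M_\e\|v\|$ with $M_\e:=C_\e\bigl(\sum_{l\in\Z}e^{-\e|l|}\bigr)^{1/2}$, a constant independent of $k$; this is the ``Moreover'' assertion.

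For the distortion \eqref{3e} I would use $F^j_{x_{k+m}}\circ F^m_{x_k}=F^{j+m}_{x_k}$, substitute $l=j+m$, and apply the multiplicativity $\rho^{(k+m)}_{l-m}=\rho^{(k)}_l/\rho^{(k)}_m$:
$$\|F^m_{x_k}v\|^2_{x_{k+m}}=\sum_{l\in\Z}e^{-3\e|l-m|}\bigl(\rho^{(k+m)}_{l-m}\bigr)^{-2}\|F^l_{x_k}v\|^2=\bigl(\rho^{(k)}_m\bigr)^2\sum_{l\in\Z}e^{-3\e|l-m|}\bigl(\rho^{(k)}_l\bigr)^{-2}\|F^l_{x_k}v\|^2.$$
Since $\bigl|\,|l-m|-|l|\,\bigr|\le|m|$ for every $l$, the weight $e^{-3\e|l-m|}$ differs from $e^{-3\e|l|}$ by a factor in $[e^{-3\e|m|},e^{3\e|m|}]$, whence
$$\bigl(\rho^{(k)}_m\bigr)^2e^{-3\e|m|}\,\|v\|^2_{x_k}\ \le\ \|F^m_{x_k}v\|^2_{x_{k+m}}\ \le\ \bigl(\rho^{(k)}_m\bigr)^2e^{3\e|m|}\,\|v\|^2_{x_k}.$$
Taking the ratio of the maximum and minimum of $\|F^m_{x_k}v\|_{x_{k+m}}$ over $\{\|v\|_{x_k}=1\}$, the factors $\rho^{(k)}_m$ cancel and the ratio is at most $e^{3|m|\e}$, which is exactly \eqref{3e}, valid for all $m\in\Z$.

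The one real obstacle is the choice of renormalizing factor: if $\rho^{(k)}_l$ is replaced by $\|F^l_{x_k}\|$, or even by the geometric mean of the extreme singular values, the corresponding ratios fail to be multiplicative and an uncontrolled factor $K_F(x_k,m)$ — i.e. a constant $C_\e$ — survives in the distortion estimate; the volume scaling $|\det F^l_{x_k}|^{1/d}$ is the natural normalization that is \emph{exactly} multiplicative, so that all $\rho$-factors cancel and the entire error is carried by the geometric weight $e^{-3\e|l|}$. Beyond this point, convergence of the series and the comparison with the background metric are routine, and non-periodicity of $x$ plays no role in the construction.
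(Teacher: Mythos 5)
Your proof is correct, and it takes a genuinely different route from the paper. The paper normalizes the series by $\|F^m_{x_k}(u_k)\|$, where $u_k=F^k_x(u)/\|F^k_x(u)\|$ is the forward/backward propagation of a fixed unit vector $u\in\E_x$; the relevant multiplicativity then comes from the identity $u_{k+1}=F_{x_k}(u_k)/\|F_{x_k}(u_k)\|$, the distortion bound is proved directly only for $n=1$, and the general case is obtained by telescoping the two-sided operator estimate $e^{-3\e/2}\|F_{x_k}(u_k)\|\,\|v\|_{x_k}\le\|F_{x_k}v\|_{x_{k+1}}\le e^{3\e/2}\|F_{x_k}(u_k)\|\,\|v\|_{x_k}$. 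Your choice $\rho^{(k)}_l=|\det F^l_{x_k}|^{1/d}$ replaces this orbit-dependent normalization by a canonical one that is exactly multiplicative by construction, which lets you prove \eqref{3e} for all $n$ in a single reindexing step with the $\rho$-factors cancelling cleanly; it also avoids the auxiliary choice of $u$ and, as you note, does not use non-periodicity of $x$ (the paper needs it so that the propagated vectors $u_k$ do not clash on a returning orbit). Both approaches are sound and of comparable length; yours is arguably more transparent because the exact multiplicativity is automatic rather than inherited from a chosen vector, at the modest cost of invoking the singular-value bound $\sigma_d\le\rho^{(k)}_l\le\sigma_1$. One small remark: your concluding speculation that the alternative is ``$\|F^l_{x_k}\|$ or the geometric mean of the extreme singular values'' does not describe what the paper actually does, but this does not affect the correctness of your argument.
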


\begin{proof} We choose a unit vector $u\in \E_x$ and set
$u_k=F^k_x(u) / \|F^k_x(u)\| \in \E_{x_k}$. 
For a vector $v\in  \E_{x_k}$ we define
$$
  \|v\|_{x_k}^2 = \sum_{m=-\infty}^\infty 
  \frac{\|F^m_{x_k} (v)\|^2}{\|F^m_{x_k} (u_k)\|^2 \cdot e^{3|m|\e}}
$$
By the assumption on $K_F$,
$\|F^m_{x_k} (\frac{v}{\|v\|})\| \cdot \|F^m_{x_k} (u_k)\|^{-1} 
\le C_\e e^{|m|\e}$
and hence the terms of this series are bounded by 
$C_\e^2 e^{-|m|\e} \|v\|^2$.
This implies that  the series converges and  
$\|v\|_{f^k x}^2 \le M_\e^2\|v\|^2,\,$
where $\,M_\e^2=C_\e^2 \sum_{n=-\infty}^\infty e^{-|m|\e}$.
Clearly, $\|v\|_{x_k}^2$ is at least the term with $m=0$,
and thus $\|v\|_{x_k} \ge \|v\|$.
\vskip.1cm

We note that it suffices to prove the estimate for $n=1$,
then it automatically follows for all $n$. 
We observe that $u_{k+1}$ is a unit vector parallel to 
$F_{x_k}(u_k)$, and hence $u_{k+1}=F_{x_k}(u_k)/\|F_{x_k}(u_k)\|$.
For any vector $v\in \E_{x_k}$ we estimate 

$$ 
\begin{aligned}
 &   \|F_{x_k}(v)\|_{x_{k+1}}^2 = \sum_{m=-\infty}^\infty 
  \frac{\|F^m_{x_{k+1}}(F_{x_k}(v))\|^2}{\|F^m_{x_{k+1}} 
  (u_{k+1})\|^2 \cdot e^{3|m|\e}} 
  = \sum_{m=-\infty}^\infty 
  \frac{\|F^m_{x_{k+1}}(F_{x_k}(v))\|^2\cdot \|F_{x_k}(u_k)\|^2}
  {\|F^m_{x_{k+1}}   (F_{x_k}(u_{k}))\|^2 \cdot e^{3|m|\e}} \\
  & = \sum_{m=-\infty}^\infty 
  \frac{\|F^{m+1}_{x_k}(v))\|^2 \cdot \|F_{x_k}(u_k)\|^2}
  {\|F^{m+1}_{x_k} (u_k))\|^2 \cdot e^{3|m|\e}} 
  =\;  \|F_{x_k}(u_k)\|^2 \sum_{j=-\infty}^\infty 
  \frac{\|F^{j}_{x_k}(v))\|^2 }
  {\|F^{j}_{x_k} (u_k))\|^2 \cdot e^{3|j-1|\e}} \\
 & \le  \|F_{x_k}(u_k)\|^2 \sum_{j=-\infty}^\infty 
  \frac{\|F^{j}_{x_k}(v))\|^2 \cdot e^{3\e}}
  {\|F^{j}_{x_k} (u_k))\|^2 \cdot e^{3|j|\e}} 
 \le  \|F_{x_k}(u_k)\|^2 \cdot \|v\|_{x_k}^2 \cdot e^{3\e}.
\end{aligned}
$$
Here we used the estimate  $|j|-1\le |j-1|$. Similarly,
using  $ |j-1|\le |j|+1$ we obtain $\|F_{x_k}(v)\|_{x_{k+1}}^2 
      \ge  \|F_{x_k}(u_k)\|^2 \cdot \|v\|_{x_k}^2 \cdot e^{-3\e} $.
Thus, for any vector $v\in \E_{x_k}$
$$
      e^{-(3/2)\e}\cdot \|F_{x_k}(u_k)\| \cdot \|v\|_{x_k} 
      \le \|F_{x_k}(v)\|_{x_{k+1}} 
      \le  e^{(3/2)\e} \cdot \|F_{x_k}(u_k)\| \cdot \|v\|_{x_k}.
$$  
It follows that for any two vectors $v,w \in  \E_{x_k}$
with $ \|v\|_{x_{k}} = \|w\|_{x_{k}}=1$
$$
   e^{-3\e} \|F_{x_k}(v)\|_{x_{k+1}} \le \|F_{x_k}(w)\|_{x_{k+1}}
   \le  e^{3\e} \|F_{x_k}(v)\|_{x_{k+1}}, 
$$   
and hence 
$ e^{-3\e}\le \|F_{x_k}(w)\|_{x_{k+1}} / \|F_{x_k}(v)\|_{x_{k+1}} 
\le e^{3\e}$.

\end{proof}

\begin{lemma} \label{F_i}
For $i=0, \dots , n-1$, 
$\;\| ( F^i_x )^{-1} \| \cdot  \|\, F^i_y\,\| \le C_3 e^{3i\e}$.
\end{lemma}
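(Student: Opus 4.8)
The plan is to exploit the special metrics $\|\cdot\|_{x_k}$ on the fibers along the orbit of the non-periodic point $x$ produced by Lemma \ref{metrics}, together with an analogous family along the orbit of $y$. Since $y\in W^s_{loc}(x)$, the point $y$ is also non-periodic (it shares a backward asymptotic with $x$, or in any case only one of $x,y$ can be periodic and we have assumed $x$ is not), so Lemma \ref{metrics} applies to $y$ as well, yielding metrics $\|\cdot\|_{y_k}$ with the same two properties: the quasiconformal distortion of $F$ measured in these metrics is at most $e^{3|n|\e}$, and each $\|\cdot\|_{y_k}$ is comparable to the background metric with the uniform constant $M_\e$, i.e. $\|v\|\le\|v\|_{y_k}\le M_\e\|v\|$.

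The key computation is then to estimate $\|(F^i_x)^{-1}\|$ and $\|F^i_y\|$ separately. First I would pass to the adapted metrics: by the comparison $\|v\|\le\|v\|_{x_i}\le M_\e\|v\|$, the operator norm $\|(F^i_x)^{-1}\|$ (background-to-background) is bounded by $M_\e$ times the norm of $(F^i_x)^{-1}$ measured from $\|\cdot\|_{x_i}$ to $\|\cdot\|_{x_0}$, and similarly $\|F^i_y\|\le M_\e$ times the norm of $F^i_y$ from $\|\cdot\|_{y_0}$ to $\|\cdot\|_{y_i}$. So it suffices to control these two ``adapted'' norms by something like $C e^{(3/2)i\e}$ each, after which their product is $\le C_3 e^{3i\e}$ as required (absorbing $M_\e^2$ and the comparison constants into $C_3$).

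To control the adapted norms, I would use the estimate derived inside the proof of Lemma \ref{metrics}: for any $v$,
$$
e^{-(3/2)\e}\,\|F_{x_k}(u_k)\|\cdot\|v\|_{x_k}\le\|F_{x_k}(v)\|_{x_{k+1}}\le e^{(3/2)\e}\,\|F_{x_k}(u_k)\|\cdot\|v\|_{x_k},
$$
and its telescoped version over $k=0,\dots,i-1$, which gives that $F^i_x$ scales $\|\cdot\|_{x_0}$-unit vectors into $\|\cdot\|_{x_i}$-vectors of norm within $e^{\pm(3/2)i\e}$ of $\prod_{k=0}^{i-1}\|F_{x_k}(u_k)\| = \|F^i_x(u)\|$. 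In particular the adapted operator norm of $(F^i_x)^{-1}$ is $\le e^{(3/2)i\e}/\|F^i_x(u)\|$. The same reasoning along the $y$-orbit, starting from the unit vector which is the local-coordinate image $\tilde u\in\E_y$ of $u\in\E_x$, gives adapted operator norm of $F^i_y$ at most $e^{(3/2)i\e}\cdot\|F^i_y(\tilde u)\|$. Multiplying, the two ``central'' scalars combine into $\|F^i_y(\tilde u)\|/\|F^i_x(u)\|$, and here is where H\"older continuity of $F$ and $y\in W^s_{loc}(x)$ enter: this ratio stays bounded (in fact converges to a definite limit) because $(F^i_x)^{-1}\circ F^i_y\to\Id$ in a weak enough sense — concretely, one bounds $\|F^i_y(\tilde u) - F^i_x(u)\|$ using the same product-of-near-identities expansion as in \eqref{est}--\eqref{r_i}, but this is mildly circular, so instead I would argue by a Gronwall/bootstrap: let $D_i$ be the ratio, show $|D_{i+1}-D_i|\le C_2\delta^\beta e^{-\kappa\beta i}\cdot e^{3i\e}\cdot(\text{something already bounded})$ and use summability of $e^{(3\e-\kappa\beta)i}$ to conclude $D_i$ is uniformly bounded by a constant $C_3$ independent of $i$ and $n$.

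The main obstacle is precisely this last point: showing the ratio $\|F^i_y(\tilde u)\|/\|F^i_x(u)\|$ is uniformly bounded without invoking the very conclusion \eqref{F^i} one is trying to prove. The clean way around it is to run the argument of \eqref{est}--\eqref{F^i} as a simultaneous induction on $n$: assume \eqref{F^i} holds for indices up to $n-1$, deduce from \eqref{est}--\eqref{r_i} that $\|(F^n_x)^{-1}\circ F^n_y - \Id\|\le C\delta^\beta<1/2$ (shrinking $\delta_0$), hence $\|(F^n_x)^{-1}\circ F^n_y\|$ and its inverse are within a factor $2$ of $1$; combined with the adapted-metric control of $\|(F^n_x)^{-1}\|\cdot\|F^n_x\|$-type quantities (which are $\le M_\e^2 e^{3n\e}$ unconditionally from Lemma \ref{metrics} applied at $x$ alone) this pins down $\|(F^n_x)^{-1}\|\cdot\|F^n_y\|\le C_3 e^{3n\e}$, closing the induction. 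I expect the bookkeeping of constants ($C_0,C_1,C_2,C_3,M_\e$, and the role of $\delta_0$) to be the only delicate part; the geometric input is entirely contained in Lemma \ref{metrics} and the Anosov contraction rate $\kappa$.
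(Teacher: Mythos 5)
Your final plan is correct, but it is genuinely different from the paper's proof, and the route you take through a dead end before arriving there is worth pointing out.

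Your first attempt — building adapted metrics along both the $x$-orbit and the $y$-orbit and estimating $\|(F^i_x)^{-1}\|$ and $\|F^i_y\|$ separately — does hit exactly the circularity you identify: the ratio $\|F^i_y(\tilde u)\|/\|F^i_x(u)\|$ is the same quantity you are trying to control. The paper avoids this entirely and never introduces metrics along the $y$-orbit. Instead it works with a single family of adapted metrics $\|\cdot\|_{x_k}$ along the $x$-orbit, measures \emph{both} operators $F_{x_k}$ and $F_{y_k}$ (the latter via the local-coordinate identification) in these same metrics, and telescopes the one-step estimates. The two ingredients are: the adapted single-step quasiconformality $\|F_{x_k}\|_k\cdot\|(F_{x_k})^{-1}\|_k\le e^{3\e}$ from Lemma \ref{metrics}, and the H\"older bound $\|F_{y_k}\|_k/\|F_{x_k}\|_k\le 1+K_2\,\dist(x_k,y_k)^\beta$, whose product over $k$ converges because $\dist(x_k,y_k)$ decays like $e^{-\kappa k}$. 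There is no induction and no circularity: the lemma is established first, in one pass, and only then fed into the proof of Proposition \ref{close to Id}.

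Your second plan — a simultaneous induction on $n$ coupling $\|(F^i_x)^{-1}\|\cdot\|F^i_y\|\le C_3 e^{3i\e}$ with the bound $\|(F^n_x)^{-1}\circ F^n_y-\Id\|\le C\delta^\beta$ — is a valid alternative. The key inequality $\|(F^n_x)^{-1}\|\cdot\|F^n_y\|\le K_F(x,n)\cdot\|(F^n_x)^{-1}\circ F^n_y\|$ closes the step, since $K_F(x,n)\le C_\e e^{n\e}$ by hypothesis and $\|(F^n_x)^{-1}\circ F^n_y\|\le 3/2$ by the inductive bound with $\delta_0$ small. Two remarks. First, you should fix $C_3$ once (e.g. $C_3=\max\{1,\tfrac32 C_\e\}$) and then choose $\delta_0$ depending on $C_3$; ``shrinking $\delta_0$'' during the induction would be incoherent. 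Second, you invoke Lemma \ref{metrics} for the bound $K_F(x,n)\le M_\e^2 e^{3n\e}$, but this is actually weaker than the hypothesis $K_F(x,n)\le C_\e e^{n\e}$ of Proposition \ref{close to Id}; your induction does not need Lemma \ref{metrics} at all, which is a small simplification you missed. The tradeoff is that your version does not prove the lemma as a standalone statement but only jointly with Proposition \ref{close to Id}(a), whereas the paper's telescoping argument is self-contained and makes essential use of the special metrics.
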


\begin{proof}
 We consider the metrics $\|\cdot\|_{x_k}$ on $\E_{x_k}$, 
 $k=0,\dots , i,$  given by  Lemma \ref{metrics}. 
 We denote by $\|F_{x_k}\|_k$ the norm of the operator  $F$ from 
 $(\E_{x_k}, \,\|\cdot\|_{x_k})$ to $(\E_{x_{k+1}}, \,\|\cdot\|_{x_{k+1}})$
 and we denote by $\|(F_{x_k})^{-1}\|_k$ the norm of the 
 corresponding inverse operator.
 Since $\|v\| \le \|v\|_{x_k} \le M_\e\|v\|$ for any $v\in \E_{x_k}$, 
 it is easy to see that 
 $(1/M_\e)\|F_{x_k}\| \le\|F_{x_k}\|_k\le M_\e \|F_{x_k}\|$.
Using   H\"older continuity of   $F$
 in the metric $\|\cdot\|$, we obtain
$$
\begin{aligned}
  \frac{\|F_{x_k}\|_k}{\|F_{y_k}\|_k} \;\le\; &
  1+ \frac{|\, \|F_{x_k}\|_k-\|F_{y_k}\|_k \,|}{\|F_{y_k}\|_k} \;\le\;
   1+ \frac{\|F_{x_k}-F_{y_k}\|_k }{\|F_{y_k}\|_k} \;\le\;
    1+ \frac{M_\e^2 \,\|F_{x_k}-F_{y_k}\| }{\|F_{y_k}\|} \\
   \;\le\;  &1+ \frac{M_\e^2 
   \cdot K_1  (\dist (x_k,y_k))^\beta}{\min_z \|F_z\|} = 
  1+ K_2 \cdot (\dist (x_k,y_k))^\beta.
  \end{aligned}
 $$ 
 
For $n=1$,  the inequality \eqref{3e} gives
 $\|F_{x_k}\|_k\cdot \|(F_{x_k})^{-1}\| _k\le e^{3\e}$
and we estimate
$$
\begin{aligned}
&\| ( F^i_x )^{-1} \|_i \cdot  \|\, F^i_y\,\|_i  \\
&\le   \|(F_x)^{-1}\|_0 \cdot \|(F_{x_1})^{-1}\|_1  \cdots 
     \|(F_{x_{i-1}})^{-1}\|_{i-1} \cdot
      \|F_y\|_0 \cdot \|F_{y_1}\|_1 \cdots  \|F_{y_{i-1}} \|_{i-1}
  \end{aligned}
$$   
$$
 \hskip1cm  =\frac{\|F_y\|_0}{\|F_x\|_0}e^{3\e} \cdot  
  \frac{\|F_{y_1}\|_1}{\|F_{x_1}\|_1}e^{3\e} 
   \cdots \frac{\|F_{y_{i-1}} \|_{i-1}}{\|F_{x_{i-1}} \|_{i-1}}e^{3\e} \; \le \;
      e^{3i\e}\prod_{k=0}^{i-1} 
      \left( 1+K_2 \cdot \left( \dist (x_k, y_k) \right)^\b \right) 
 $$ $$    
\le  e^{3i\e} \prod_{k=0}^{i-1} 
      \left( 1+K_2 \cdot \left( C_1 \delta e^{-\kappa k} \right)^\b 
      \right) \; \le  \;    e^{3i\e}
      \prod_{k=0}^{i-1} \left( 1+K_3 \cdot e^{- \b\kappa k} \right) \; \le  \; 
       K_4e^{3i\e}.  
 $$ 
 It follows that  
 $\|(F^i_x )^{-1}\| \cdot \|\, F^i_y\,\| \le M_\e^2 K_4 e^{3i\e} 
 = C_3 e^{3i\e}$.
\end{proof}     

This completes the proof of Proposition \ref{close to Id}.

\subsection{\bf Proof of Proposition \ref{structure}} 
We identify the spaces of conformal structures at nearby points by
identifying the fibers of $\E$ with $\rd$ using local coordinates.
We use the distance between conformal structures 
described in Section \ref{conf structure}.
Let $\tau$ be an invariant $\mu$-measurable conformal structure
on $\E$. 

 First we estimate the distance  between the values of $\tau$
 at $x$ and at a nearby point $y\in W_{loc}^s(x)$.
Let $x_n=f^n(x)$, $y_n=f^n(y)$, and let $D_x:=(F^n_x)^*$ be the 
isometry from $\c(f^nx)$ to $\c(x)$ induced by $F^n_x$ 
(see \eqref{pull-back}).
Since the conformal structure $\tau$ is invariant, 
$\tau(x)=D_x (\tau(x_n))$  and $\tau(y)=D_y (\tau(y_n))$. 
Using this and the fact that  $D_y$ is an isometry, we obtain
 $$
  \begin{aligned}
  \dist(\tau(x), \tau(y))&=\dist \left( D_x (\tau(x_n)), D_y (\tau(y_n)) \right) \\
  &\leq \dist \left( D_x(\tau(x_n)), D_y(\tau(x_n))  \right) +
         \dist   \left( D_y(\tau(x_n)), D_y(\tau(y_n))  \right)\\
  &=\dist \left( \tau(x_n), ((D_x)^{-1} \circ D_y)(\tau(x_n))  \right) +
         \dist  \left( \tau(x_n),\tau(y_n) \right).
 \end{aligned}
 $$
To estimate $ \left( \tau(x_n), ((D_x)^{-1} \circ D_y)(\tau(x_n))  \right)$
 we  use the following lemma.

\begin{lemma}\label{A*}
Let $\sigma$ be a conformal structure on $\R^d$ and $A$ be a linear
transformation of $\R^d$ sufficiently close to
identity. Then 
  $$
     \dist\,(\sigma, A^*(\sigma)) \le k(\sigma)\cdot\|A-\Id\,\|,
  $$
where $k(\sigma)$ is bounded on compact sets in $\c^d$. 
More precisely, if $\sigma$ is given by a matrix $C$, then
$k(\sigma) \le 3d \, \|C^{-1}\|\cdot \|C\|$
 for any $A$ with $\|A-\Id\,\| \le (6 \|C^{-1}\|\cdot \|C\|)^{-1}$. 

\end{lemma}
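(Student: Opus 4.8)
The plan is to move $\sigma$ to the identity structure by a suitable element of $GL(d,\R)$, so that the displacement of $\sigma$ under $A^*$ becomes the displacement of $\Id$ under a matrix $B$ that is close to $\Id$, and then to read off the distance from the explicit formula \eqref{dist(id,C)} via a perturbative estimate on singular values.

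By the convention of Section \ref{conf structure} the matrix $C$ representing $\sigma$ is symmetric positive definite with $\det C=1$, and $A^*(\sigma)$ is represented by $A[C]=(\det A^TA)^{-1/d}A^TCA$. Using the recipe $\dist(C_1,C_2)=\dist(\Id,X[C_2])$ valid when $X[C_1]=\Id$, I would take $X=C^{-1/2}$, the inverse of the positive definite square root of $C$, which satisfies $C^{-1/2}[C]=\Id$ precisely because $\det C=1$. A short computation using the composition rule $X[Y[D]]=(YX)[D]$ for the $GL(d,\R)$-action, together with the fact that the determinantal normalizations match (since $\det(AC^{-1/2})^2=(\det A)^2=\det(B^TB)$), then shows $C^{-1/2}[A[C]]=B[\Id]$, where $B:=C^{1/2}AC^{-1/2}$. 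Hence $\dist(\sigma,A^*(\sigma))=\dist(\Id,B[\Id])$, and $B[\Id]=(\det B^TB)^{-1/d}B^TB$ has eigenvalues $\lambda_i=s_i^2\,(\prod_j s_j^2)^{-1/d}$, where $s_1,\dots,s_d$ denote the singular values of $B$.

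Next I would make this quantitative. Since $B-\Id=C^{1/2}(A-\Id)C^{-1/2}$ and $\|C^{\pm1/2}\|=\|C^{\pm1}\|^{1/2}$, one gets $\eta:=\|B-\Id\|\le\|C\|^{1/2}\|C^{-1}\|^{1/2}\,\|A-\Id\|$; under the hypothesis $\|A-\Id\|\le(6\|C^{-1}\|\|C\|)^{-1}$ and using $\|C\|,\|C^{-1}\|\ge1$ this gives $\eta\le1/6$. The standard perturbation bound $|s_i(B)-s_i(\Id)|\le\|B-\Id\|$ yields $s_i\in[1-\eta,1+\eta]$, hence $|\log s_i|\le\eta/(1-\eta)$, and therefore $|\log\lambda_i|=|2\log s_i-(2/d)\sum_j\log s_j|\le 4\eta/(1-\eta)$. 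Substituting into \eqref{dist(id,C)} gives $\dist(\Id,B[\Id])\le\frac{\sqrt d}{2}\cdot\sqrt d\cdot\big(4\eta/(1-\eta)\big)=2d\eta/(1-\eta)$, and since $\eta\le1/6$ we have $2/(1-\eta)\le3$, so $\dist(\sigma,A^*(\sigma))\le 3d\eta\le 3d\,\|C\|^{1/2}\|C^{-1}\|^{1/2}\,\|A-\Id\|\le 3d\,\|C^{-1}\|\|C\|\,\|A-\Id\|$. This is the asserted bound, and the first, qualitative statement follows at once since $\|C\|$, $\|C^{-1}\|$, and hence the threshold on $\|A-\Id\|$, are bounded on compact subsets of $\c^d$.

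I do not expect a genuine obstacle. The two points that need care are the bookkeeping of the determinantal normalizations in the identification $C^{-1/2}[A[C]]=B[\Id]$, and tracking the numerical constants so that the final coefficient is exactly $3d$; the slack in the hypothesis $\eta\le1/6$ is what makes the constant work out.
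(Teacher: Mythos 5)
Your proof is correct, and it rests on the same basic strategy as the paper's: use the $GL(d,\R)$-equivariance of the metric on $\c^d$ to transport $\sigma$ to the identity structure via a matrix $X$ with $X^TX=C^{-1}$, and then bound the resulting displacement of $\Id$. The two arguments diverge only at the final estimate. The paper writes $A=\Id+R$, expands $X^TA^TCAX=\Id+B$ with $B=X^TCRX+X^TR^TCX+X^TR^TCRX$, bounds $\|B\|\le 3\|C^{-1}\|\,\|C\|\,\|R\|$, and then invokes the norm-comparison inequality \eqref{compare} together with $\|(\Id+B)^{-1}\|\le 1+2\|B\|$. You instead pass to the conjugate $B=C^{1/2}AC^{-1/2}$ (so that $B^TB=X^TA^TCAX$), apply Weyl's perturbation bound $|s_i(B)-1|\le\|B-\Id\|$ to the singular values, and plug into the exact distance formula \eqref{dist(id,C)}. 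Both routes give the same constant $3d$. Yours is slightly more direct in that it avoids expanding $X^TA^TCAX$ and instead reads the eigenvalues of the image structure straight off the singular values of $B$, at the cost of invoking singular-value perturbation theory; the paper's is more elementary, needing only the operator-norm bound \eqref{compare} and a Neumann-series estimate. One small point worth making explicit in your write-up: the threshold computation $\eta\le 1/6$ and the final passage from $\|C\|^{1/2}\|C^{-1}\|^{1/2}$ to $\|C\|\,\|C^{-1}\|$ both use that $\|C\|,\|C^{-1}\|\ge 1$, which holds because $C$ is positive definite with $\det C=1$.
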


\begin{proof}

We write $A=\Id+R.$  Recall that the matrix $C$ corresponding 
to $\sigma$ is symmetric and positive definite with 
determinant 1. Thus there exists an orthogonal matrix $Q$ 
such that $Q^T C Q$ is a diagonal matrix whose diagonal entries 
are the  eigenvalues $\lambda_i>0$ of $C$. Let $X$ be 
the product of $Q$ and the diagonal matrix with entries 
$1/\sqrt{\lambda_i}$. Then $X$ has determinant 1
and  $X[C]=X^T C X=\Id$.  Now we estimate
$$
\begin{aligned}
  &\dist \left( \sigma, A^*(\sigma) \right) =
               \dist\, (C, A[C])=\;\dist\, (\Id, X[A[C]]) \\
  &=\dist \left( \Id, X^TA^TCAX \right)
 =\dist \left( \Id, X^T(\Id+R^T)C(\Id+R)X \right) \\&=\dist\, (\Id, \Id+B),
 \;\text{ where }\; B=X^TCRX+X^TR^TCX+X^TR^TCRX.
  \end{aligned}
$$
Since $\| R \| \le 1$, we observe that $\|B\| \le 3 \|X\|^2\cdot \|C\| \cdot \|R\|$. 
Also $\|X\|^2 \le \|C^{-1}\|$, as follows from the construction of $X$.
Thus $\|B\| \le 3 \|C^{-1}\|\cdot \|C\| \cdot \|R\|$.
Since $\| R\|\le(6 \, \|C^{-1}\|\cdot \|C\|)^{-1}$,  $\,\|B\| \le \frac 12$ 
and hence $\| (\Id +B)^{-1}\| \le 1+ 2\|B\|$.
Using \eqref{compare}, we  estimate
$$
\begin{aligned}
   \dist \, (\sigma,  A^*(\sigma) ) & =
  \dist\, (\Id, \Id+B) \le  
  d/2\cdot \log \left( \max \{ \| \Id +B \| , \| (\Id +B)^{-1}\| \} \right) \\
  & \le  d/2 \cdot \log (1+ 2 \| B \|) \le  
  d \| B \| \le 3d \, \|C^{-1}\|\cdot \|C\| \cdot \|R\| 
\end{aligned}
$$
\end{proof}

Since the conformal structure $\tau$ is $\mu$-measurable, 
by Lusin's theorem there exists a compact set $S\subset \M$ 
with $\mu(S)>1/2$ on which $\tau$ is uniformly continuous and 
bounded. 

We now show that for $x_n$ in $S$ the term 
$\,\dist \left( \tau(x_n), ((D_x)^{-1} \circ D_y)(\tau(x_n))  \right)$ 
is H\"older in $\dist (x,y)$. For this we observe that the map 
$(D_x)^{-1} \circ D_y$ is induced by $(F^n_x)^{-1}\circ F_y^n,\,$
and $\,\|(F^n_x)^{-1} \circ F^n_y - \Id \,\| \leq k\cdot \dist(x,y)^\beta$
by the assumption. 
We apply  Lemma \ref{A*} to $\sigma=\tau(x_n)$ and 
$A=(F^n_x)^{-1} \circ F^n_y$.
Since the conformal structure $\tau$ is bounded on $S$, so are 
$\|C^{-1}\|$ and $\|C\|$. We obtain that 
$$
\begin{aligned}
\dist \left( \tau(x_n), ((D_x)^{-1} \circ D_y)(\tau(x_n))  \right)
& \le \,  k(\tau(x_n)) \cdot \|(F^n_x)^{-1} \circ F^n_y - \Id \,\| \\
&\le \,  k_1\cdot \dist(x,y)^\beta,
 \end{aligned}
$$
where the constant $k_1$ depends  on the set $S$.
We conclude that if $x_n$ is in $S$ then
 $$
 \dist(\tau(x), \tau(y))
  \leq \dist(\tau(x_n),\tau(y_n)))+k_1 \cdot \dist(x,y)^\beta.
 $$ 
 
 Let $G$ be the set of points in $\M$ for which the frequency of 
 visiting $S$ equals $\mu(S)>1/2$.
By Birkhoff Ergodic Theorem $\mu(G)=1$. If both $x$ and $y$ are 
 in $G$, then there exists a sequence $\{ n_i \}$ such that 
 $x_{n_i}\in S$ and $y_{n_i}\in S$.
Since $y\in W^s_{loc}(x)$, $\,\dist(x_{n_i}, y_{n_i})\to 0$ and hence 
$\dist(\tau(x_{n_i}), \tau(y_{n_i}))\to 0$ by continuity of $\tau$ on $S$. 
Thus, we obtain
 $$
   \dist(\tau(x), \tau(y))\leq  k^s \cdot\dist(x,y)^\beta.
 $$
By a similar argument, for $x, z\in G$ with $z\in W^u_{loc}(x)$
we have 
$\dist(\tau(x), \tau(z))\leq  k^u \cdot\dist(x,z)^\beta$.

Consider a small open set in $\M$ with a product structure.
For $\mu$ almost all local stable leaves, the set of  points of 
$G$ on the leaf has full conditional measure.  
Consider points $x,y \in G$ lying on two such local stable leaves.
We denote by $H_{x,y}$ be the unstable holonomy map between 
$W^s_{loc}(x)$ and $W^s_{loc}(y)$.  Since $\mu$ has local product 
structure,  the holonomy maps are absolutely continuous
with respect to the conditional measures. Hence there exists a point 
$z\in W^s_{loc}(x)\cap G$ close to $x$ such that $H_{x,y}(z)$ is also in $G$.
By the above argument,  
  $$
   \begin{aligned}
    \dist (\tau(x), \tau(z))&\leq  k^s \cdot\dist(x,z)^\beta,  \\
    \dist(\tau(z), \tau(H_{x,y}(z))) &\leq  k^u \cdot
       \dist(z,H_{x,y}(z))^\beta, \quad\text{and} \\
    \dist(\tau(H_{x,y}(z)),\tau(y))&\leq k^s\cdot
        \dist(H_{x,y}(z),y)^\beta.
   \end{aligned}
  $$
Since the points $x$, $y$, and $z$ are close, it is clear from
the local product structure that
 $$ 
   \dist(x,z)^\beta + \dist(z,H_{x,y}(z))^\beta +
    \dist(H_{x,y}(z),y)^\beta \leq  k_5\cdot\dist(x,y)^\beta.
 $$  

Hence, we obtain $\dist(\tau(x),\tau(y))\leq  k_6
\cdot \dist(x,y)^\beta$ for  all $x$ and $y$ in a set of full measure 
$\tilde G \subset G$.
We can assume that $\tilde G$ is invariant by considering 
$\bigcap_{n=-\infty}^{\infty} f^n(\tilde G)$. Since $\mu$ has
full support, the set $\tilde G$ is dense in $\M$. Hence we can extend $\tau$ 
from $\tilde G$ and obtain an invariant H\"older continuous conformal 
structure $ \tau$ on $\M$. 
$\QED$


\subsection{Proof of Proposition \ref{bounded measurable} }

Let $\tau_0$ be a continuous conformal structure on $\E$.
We denote by $\tau_0(x)$ the conformal structure on $\E_x$, 
$x\in \M$. We consider the set 
$$
S(x)=\{\,(F^n_x)^*(\tau_0(f^n x)): \;n\in\Z\,\} 
$$ 
in $\c(x)$, the space of conformal structures on $\E_x$.
Here $F^*$ is the pull-back action given by \eqref{pull-back}. 
Since $F$ is uniformly quasiconformal,  the sets $S(x)$ have 
uniformly bounded diameters.  Since the space $\c(x)$ has 
non-positive curvature,  for every $x$ there exists a 
uniquely determined ball of the smallest radius containing $S(x)$.
We denote its center by $\tau (x)$. 

It follows from the construction that the conformal structure 
 $\tau$ is $F$-invariant and its distance from $\tau_0$ is bounded.
We also note that for any $k\ge 0$ the set 
$S_k(x)=\{\,(F^n_x)^*(\tau_0(f^n x)): \;|n| \le k \}$
depends continuously on $x$ in Hausdorff distance, and so does 
the center $\tau_k(x)$ of the smallest ball containing $S_k(x)$.
Since $S_k(x) \to S(x)$ as $k \to \infty$ for any $x$,
the conformal structure $\tau$ is the pointwise limit of continuous 
conformal structures $\tau_k(x)$. 
Hence $\tau$ is Borel measurable.
$\QED$


\subsection{Proof of Proposition \ref{distribution}} 

We consider a fiber bundle $\G$ over $\M$ whose fiber over $x$ 
is the Grassman manifold $\G _x$ of all $k$-dimensional 
subspaces in $\E_x$. The map $F_x:\E_x \to \E_{fx}$ induces 
a natural map $\F_x : \G_x \to \G_{fx}.$  
Thus we obtain a cocycle $\F : \G \to \G $ over  
$f:\M\to\M$ given by $\F(x,\xi)=( f(x), \F_x(\xi))$ where  $\xi\in \G _x$. 
Since the linear cocycle $F$ and the bundle $\E$ are H\"older 
continuous, both $\F$ and $\F^{-1}$ are  H\"older continuous and
$\dist_{C^1}(\F_x, \F_y)\le k\cdot \dist(x,y)^\beta$ for 
all $x,y\in \M$. Such $\F$ is said to be in $C^\beta (f, \G)$.
\vskip.1cm

\begin{lemma} \label{neutral_lemma}
There exists $C>0$ such that for any $x \in \M$,  
subspaces $\xi, \eta \in \G_x$, $n \in \Z$, and
$\e>0$ we have
\begin{equation} \label{neutral} 
\dist (\F^n_x (\xi), \F^n_x (\eta)) \le C\cdot K_F(x,n) \cdot \dist (\xi, \eta)
\le C\cdot C_\e e^{\e |n|}\cdot \dist (\xi, \eta).
\end{equation}
\end{lemma}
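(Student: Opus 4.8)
The plan is to reduce \eqref{neutral} to a pointwise bound on the operator norm of the differential of the map induced on the Grassmann manifold by a single linear isomorphism, and then to integrate that bound along minimizing geodesics in $\G_x$. The second inequality in \eqref{neutral} is immediate from the assumption $K_F(x,n)\le C_\e e^{\e|n|}$ of Proposition \ref{distribution}, so only the first inequality needs proof, and it concerns one linear map at a time: fixing $x$ and $n$, write $A=F^n_x$, so that $\F^n_x$ is the induced map $\zeta\mapsto A\zeta$ on the Grassmann manifold $\G_x$ of $k$-planes, and note that (with respect to the background metric on $\E$) $\|A\|=\|F^n_x\|$ and $\|A^{-1}\|=\|(F^n_x)^{-1}\|$, so $\|A\|\cdot\|A^{-1}\|=K_F(x,n)$. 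Equip $\G_x$ with its standard (homogeneous) Riemannian metric; since the distance $\dist$ used on $\G$ is bi-Lipschitz equivalent to this Riemannian distance with constants depending only on $d$, it suffices to prove the bound for the latter, the comparison factor being absorbed into $C$.

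First I would compute the differential of $\zeta\mapsto A\zeta$. Identifying the tangent space $T_\zeta\G_x$ with the space $\mathrm{Hom}(\zeta,\zeta^\perp)$ of linear maps (the norm on it differing from the operator norm by a factor at most $\sqrt d$), one checks, by representing a neighborhood of $\zeta$ as graphs of linear maps over $\zeta$, that this differential sends $\phi\in\mathrm{Hom}(\zeta,\zeta^\perp)$ to $\mathrm{pr}_{(A\zeta)^\perp}\circ A\circ\phi\circ(A|_\zeta)^{-1}\in\mathrm{Hom}(A\zeta,(A\zeta)^\perp)$, where $\mathrm{pr}_{(A\zeta)^\perp}$ is the orthogonal projection onto $(A\zeta)^\perp$ and $A|_\zeta\colon\zeta\to A\zeta$ is the restriction of $A$. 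Since $\mathrm{pr}_{(A\zeta)^\perp}$ has norm $1$ and $(A|_\zeta)^{-1}$ is the restriction of $A^{-1}$ to $A\zeta$, hence has norm at most $\|A^{-1}\|$, the operator norm of the differential is at most $C_0\,\|A\|\cdot\|A^{-1}\|=C_0\,K_F(x,n)$, uniformly in $\zeta$, with $C_0$ depending only on $d$.

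Finally, $\G_x$ is a compact, hence complete, Riemannian manifold, so any $\xi,\eta\in\G_x$ are joined by a minimizing geodesic $\gamma$ of length $\dist(\xi,\eta)$; the curve $t\mapsto A\gamma(t)$ then joins $A\xi$ to $A\eta$ and, by the previous step, has speed at most $C_0\,K_F(x,n)\,\dist(\xi,\eta)$ at every point, whence
$$
 \dist(\F^n_x(\xi),\F^n_x(\eta))=\dist(A\xi,A\eta)\le C_0\,K_F(x,n)\,\dist(\xi,\eta).
$$
Absorbing the metric-comparison factor into $C$ gives the first inequality of \eqref{neutral}, and combining with the assumption on $K_F$ gives the second; the argument is identical for $n\le 0$, since $\F^n_x$ is then induced by the invertible linear map $F^n_x$ and $K_F(x,n)$ is defined for all $n\in\Z$. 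The only delicate point is the bookkeeping in the second paragraph — the identification of $T_\zeta\G_x$, the formula for the differential, and the tracking of the dimension-dependent (but $x$-, $n$-, and $\e$-independent) constants from the norm equivalences — but none of it is substantial.
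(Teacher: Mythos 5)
Your proof is correct, but it takes a genuinely different route from the paper's. The paper argues directly at the level of unit vectors: writing $D=F^n_x$, it uses the cosine formula $2\langle Dw,Dv\rangle=\|Dw\|^2+\|Dv\|^2-\|Dw-Dv\|^2$ to derive, for small angles, the pointwise bound $\angle(Dw,Dv)\le C_0\,K_F(x,n)\,\angle(w,v)$, disposes of large angles trivially, and then passes to subspaces via the maximal-angle distance on the Grassmannian, invoking its Lipschitz equivalence with any smooth Riemannian metric. You instead treat the induced map $\zeta\mapsto A\zeta$ as a smooth map of Riemannian manifolds, compute its differential explicitly in the graph chart $T_\zeta\G_x\cong\mathrm{Hom}(\zeta,\zeta^\perp)$ as $\phi\mapsto\mathrm{pr}_{(A\zeta)^\perp}\circ A\circ\phi\circ(A|_\zeta)^{-1}$, bound the operator norm of this differential by $\|A\|\cdot\|A^{-1}\|=K_F(x,n)$ (using that $(A|_\zeta)^{-1}=A^{-1}|_{A\zeta}$ and that the projection is a contraction), and integrate along a minimizing geodesic. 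Your approach is more systematic and makes the appearance of the factor $K_F(x,n)$ structurally transparent, at the cost of setting up the chart, the tangent-space identification, and the attendant dimension-dependent norm comparisons; the paper's computation is more elementary and self-contained but passes from two vectors to two subspaces somewhat tersely. Both are complete; the substantive content — a uniform Lipschitz bound of the form $C\cdot K_F(x,n)$ for the induced map on Grassmannians — is the same.
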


\begin{proof}
Let $w$ and $v$ be two unit vectors in $\E_x$.
We denote $D=F^n_{ x}$. Using the formula
$\; 2<Dw, Dv>=\|Dw\|^2+\|Dv\|^2-\|Dw-Dv\|^2$ for the inner product, 
we obtain 
$$
(2 \, \sin ({\angle (Dw, Dv)}/2)\,)^2 =2 (1-\cos \angle (Dw, Dv) )=
$$
$$
  2-\frac{2 <Dw, Dv>}{\|Dw\|\cdot \|Dv\|}=
  \frac{2 \|Dw\|\cdot \|Dv\| - \|Dw\|^2 - \|Dv\|^2 + \|Dw-Dv\|^2} 
  {\|Dw\|\cdot \|Dv\|} =
$$  
$$
  \frac{ \|Dw-Dv\|^2 -(\|Dw\| - \|Dv\|)^2 } { \|Dw\|\cdot \|Dv\|} \le
  \frac{ \|D\|^2 \cdot \|w-v\|^2} { \|Dw\|\cdot \|Dv\|} \le K_F(x,n) ^2  
  \cdot \|w-v\|^2. 
$$ 
Suppose that the angle between the unit vectors $w$ and $v$ is 
sufficiently small so that it remains small when multiplied by $K_F(x,n)$. 
Then we obtain that the angle $\angle (Dw, Dv)$ is also small and
$$
\angle (Dw, Dv) \le C_0\cdot K_F(x,n) \cdot \angle (w, v)
$$
If the right-hand side is large the estimate is trivial, 
thus for any subspaces $\xi,\eta \in \G _x$ we have
$$ 
\dist (\F^n_x (\xi), \F^n_x (\eta)) \le C\cdot K_F(x,n) \cdot \dist (\xi, \eta),
$$
where the distance between two subspaces is the maximal angle. 
 We note that the 
maximal angle distance is Lipschitz equivalent to any smooth 
Riemannian metric on the Grassman manifold, and thus 
we have the estimate in any smooth metric on $\G$. 

The case of $n<0$ can be considered similarly.
\end{proof}

The lemma implies that the expansion/contraction in the fiber
is arbitrarily slow and, in particular, slower than the expansion/contraction
of the hyperbolic system in the base. Hence the cocycle $\F$ is
{\em dominated} in the sense of  \cite[Definition 4.1]{ASV}. This notion is 
similar to the notion of domination or bunching for partially hyperbolic 
systems, the difference is that in our context $\F$ is not a diffeomorphism,
only the maps $\F_x$ are.

Dominated cocycles have H\"older continuous strong (un)stable foliations.
They sub-foliate the weak (un)stable leaves which are preimages of
(un)stable leaves in the base. The strong stable foliation gives rise to 
an {\em $s$-holonomy} for $\F$, an invariant family of maps between 
the fibers over the same stable leaf in the base. These facts are conveniently 
summarized in the following proposition, whose Lipschitz version appeared
in \cite[ Proposition 4.1]{AV}. We will refer only to part (3), which in our 
setting can be easily obtained from Proposition \ref{close to Id} and
its proof. Indeed, the desired holonomy $H^s_{x,y}$ is induced on the 
Grassmannians by $\lim_{n \to \infty} (F^n_x)^{-1} \circ F^n_y$.

\vskip.2cm

\noindent\cite[Proposition 4.2]{ASV} 
{\it If the cocycle $\F\in C^\beta(f,\G)$ is dominated
then there exists a unique partition $\W^s=\{ \W^s(x,\xi):\; (x,\xi)\in \G\}$
of the fiber bundle $\G$ such that
\vskip.1cm
\begin{enumerate}
\item every  $\W^s(x,\xi)$ is a $\beta$-H\"older graph over $W^s(x)$, 
          with H\"older constant uniform on $x$;
         \vskip.1cm
\item $\F(\W^s(x,\xi))\subset \W^s(\F(x,\xi))$ for all $(x,\xi) \in \G$;
            \vskip.1cm
\item the family of maps $H^s_{x,y}: \G _x \to \G_y$ defined for 
        $y \in W^s(x)$ by 
         $(y,H^s_{x,y}(\xi))\in \W^s(x,\xi)$ is an  $s$-holonomy for $\F$.
\end{enumerate}
Moreover, there is a dual statement for strong unstable leaves.
}
\vskip.2cm

Let $\mu$ be a measure on $\M$ with local product structure 
and full support.
A $\mu$-measurable $F$-invariant sub-bundle in $\E$ gives rise to a 
measurable $\F$-invariant section $\phi:\M \to \G$. 
We denote by $m$ the lift of $\mu$ to the graph $\Phi$ of $\phi$, i.e.
for a set $X\subset \G$, $\;m(X)=\mu(\pi(X\cap \Phi))$, where 
$\pi : \G \to \M$ is the projection. Alternatively, $m$ can be
defined by specifying that  for $\mu$-almost every $x$ in $\M$ the conditional 
measure $m_x$ in the fiber $\G _x$ is the atomic measure at $\phi(x)$.
Since $\mu$ is $f$-invariant and $\Phi$ is $\F$-invariant,
the measure $m$ is $\F$-invariant. 

Lemma \ref{neutral_lemma} implies that  Lyapunov
exponent of  $\F$ along the fiber is zero at every $\xi \in \G$, 
in particular the exponent of $m$ along the fiber is zero. 
This together with the existence of $s$- and $u$-holonomies for $\F$ 
allows us to apply \cite[Theorem C]{AV}
to the measure $m$ and conclude that there exists a system of 
conditional measures $\tilde m_x$ on $\G_x$ for $m$ which are 
holonomy invariant and vary continuously on $x$ in supp$\,\mu = \M$. 
Since the systems of conditional measures $\{m_x\}$ and 
$\{\tilde m_x\}$ coincide on a set $X$ of full $\mu$ measure, we see that
$\tilde m_x = m_x$ is the atomic measure at $ \phi(x)$ for all $x\in X$. 
Since $X$ is dense we obtain that $\tilde m_x$ is atomic for all $x\in \M$. 
Indeed, by compactness of $\G$, for any $x\in \M$ 
we can take a sequence $X \ni x_i  \to x$ so that $\phi (x_i)$ converge 
to some $\xi \in \G_x$. This implies that $\tilde m_{x_i}=m_{x_i}$ converge to 
the atomic measure at $\xi$, which therefore coincides with $\tilde m_x$ 
by continuity of the family $\{\tilde m_x\}$. 
Denoting $\tilde \phi(x) = \text{supp } \tilde m_x$ for $x\in \M$, 
we obtain a continuous section $\tilde \phi$ which  coincides with 
$\phi$ on $X$. Since $\tilde \phi$ is invariant under $s$- and $u$-holonomies
we conclude that it is $\beta$-H\"older.
This yields that the invariant measurable sub-bundle 
in $\E$ coincides $\mu$-almost everywhere with a H\"older continuous one. 
$\QED$


\subsection{Proof of Proposition \ref{dichotomy}}

We use the following particular case of Zimmer's Amenable 
Reduction Theorem:
\vskip.2cm

 \noindent 
 \cite[Corollary 1.8]{HK},  \cite[Theorem  3.5.9]{BP}
 {\it Let  $f$ be an ergodic transformation of a measure space 
 $(X,\mu)$ and let $F: X \to GL(d,\R)$ be a measurable function.
 Then there exists a measurable function $C :X \to GL(d,\R)$ 
 such that the function $G(x) = C^{-1}(fx) F(x) C(x)$ takes
 values in a maximal amenable subgroup of $GL(d,\R)$.}

\vskip.2cm 
 
 It is known that any maximal amenable subgroup of $GL(d,\R)$
 is conjugate to a group of  block-triangular matrices of the form
 $$
 \left( \begin{array}{cccc}
 A_1 & \ast & \ldots & \ast \\
 0  & A_2 & \ddots &\vdots \\
 \vdots & \ddots & \ddots & \ast \\
 0 & \ldots & 0 & A_r
 \end{array} \right)
 $$
 where each  diagonal block $A_i$ is a scalar
 multiple of a $d_i\times d_i$ orthogonal matrix 
 and $d_1+\dots + d_r=d$.
 
\begin{corollary} \label{dich} 
Let $f$ be a diffeomorphism of a smooth compact manifold 
$\M$ preserving an ergodic measure $\mu$ and let $F:\E \to \E$ 
be a measurable linear cocycle over $f$. 
Then $F$ preserves a measurable
conformal structure  either on $\E$ or  on a measurable invariant
proper non-trivial sub-bundle of $\E$.
\end{corollary}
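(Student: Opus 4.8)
The plan is to derive Corollary~\ref{dich} directly from Zimmer's Amenable Reduction Theorem quoted just above, combined with the stated classification of maximal amenable subgroups of $GL(d,\R)$. First I would trivialize the bundle: since $\E$ is measurably trivial on a set of full $\mu$-measure (as noted in Section~\ref{cocycles}), I may identify $F$ with a measurable cocycle $x \mapsto F(x) \in GL(d,\R)$ over the ergodic transformation $(f,\mu)$. The theorem then provides a measurable $C : \M \to GL(d,\R)$ with $G(x) = C^{-1}(fx)\, F(x)\, C(x)$ taking values in a maximal amenable subgroup $H \le GL(d,\R)$. Since every such $H$ is conjugate to the standard block--upper--triangular group $H_0$ whose diagonal blocks $A_i$ lie in $\R^{\ast}\cdot O(d_i)$, $d_1+\cdots+d_r=d$, I fix $g$ with $g^{-1}Hg=H_0$ and replace $C$ by $Cg$; this is still measurable, so I may assume $G$ takes values in $H_0$.

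Next I would split into cases according to the number $r$ of diagonal blocks. If $r=1$, then $G(x)\in \R^{\ast}\cdot O(d)$ for a.e.\ $x$, i.e.\ $G$ is conformal for the standard inner product on $\R^d$; transporting this structure back by $C$ and using the pull-back action \eqref{pull-back} together with the identity $F(x)=C(fx)\,G(x)\,C^{-1}(x)$, one checks that $F$ scales the resulting metric along orbits and hence preserves a measurable conformal structure on $\E$. If $r\ge 2$, then the coordinate subspace $V=\mathrm{span}\{e_1,\dots,e_{d_1}\}$ is $H_0$-invariant with $1\le d_1\le d-1$, hence $G$-invariant, so $\E'_x := C(x)V$ defines a measurable, $F$-invariant, proper, non-trivial sub-bundle of $\E$. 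Moreover $H_0$ acts on $V$ through the block $A_1\in \R^{\ast}\cdot O(d_1)$, so $G|_V$ is conformal for the standard structure on $V$, and transporting back by $C$ gives an $F$-invariant measurable conformal structure on $\E'$.

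In either case $F$ preserves a measurable conformal structure on $\E$ itself or on a measurable invariant proper non-trivial sub-bundle of $\E$, which is the claim. The only points needing care are routine: the measurability of every object constructed (all arise from the measurable map $C$ and the Borel structure on the space of conformal structures), the verification via \eqref{pull-back} that "conformal for the standard inner product in the $C$-frame'' amounts to "$F$-invariant conformal structure,'' and the observation that for $r\ge 2$ the subspace $V$ is genuinely proper and non-trivial. I do not expect a real obstacle here; the substance is entirely contained in the amenable reduction theorem and the description of its target groups, while the work of the paper lies in the subsequent continuity results (Propositions~\ref{structure} and~\ref{distribution}) that upgrade such measurable objects to H\"older continuous ones.
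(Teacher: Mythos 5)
Your proposal is correct and follows essentially the same route as the paper: trivialize the bundle measurably, apply the Amenable Reduction Theorem, conjugate the target group into the standard block--upper--triangular form with conformal diagonal blocks, and then read off either a conformal structure ($r=1$) or a proper invariant sub-bundle carrying one ($r\ge2$). One small point where you are actually more careful than the paper: for a block--upper--triangular matrix the invariant coordinate subspace is the span of the \emph{first} $d_1$ basis vectors, on which the group acts through $A_1$ (exactly as you say), whereas the paper's phrasing --- that ``the last block $A_r$ acts conformally on a $d_r$-dimensional invariant subspace'' --- is a slip, since $A_r$ acts on a quotient, not a subspace. Your version is the right one; otherwise the two arguments coincide.
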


\begin{proof}
We recall that $\E$ can be trivialized on a set of full $\mu$-measure
\cite[Proposition 2.1.2]{BP}, so we measurably identify $\E$ with 
$\M \times \rd$ and  view $F$ as a function $\M \to GL(d,\R)$. 
Thus can we apply the Amenable 
Reduction Theorem to $F$ and obtain a measurable coordinate change 
function $C :\M \to GL(d,\R)$ such that $G_x = C^{-1}(fx) F_x \,C(x)$ is of 
the above form for $\mu$-almost all $x\in \M$. If $r=1$ and $d_1=d$ we obtain
that $G_x$ is a scalar multiple of a $d\times d$ orthogonal matrix.
This implies that $F$ is conformal with respect to the pull back by $C^{-1}$
of the standard conformal structure on $\rd$. This gives a measurable 
invariant conformal structure for $F$ on $\E$. If $r>1$ then the last 
block $A_r$ acts conformally on a $d_r$-dimensional invariant 
subspace for $G$. Again pulling back by $C^{-1}$ we obtain a measurable 
invariant sub-bundle with conformal structure for $F$.
\end{proof}

Now suppose that that the system satisfies the assumptions 
of  Proposition \ref{dichotomy}. We apply Corollary \ref{dich}
with $\mu$ being the measure of maximal entropy for $f$.
If we have a measurable invariant conformal structure on $\E$
then it is H\"older continuous by Proposition \ref{structure}.
If we have a measurable invariant sub-bundle $\E'$ then it is 
H\"older continuous by Proposition \ref{distribution}. The restriction
of $F$ to $\E'$ is a H\"older continuous cocycle which also satisfies
the same pinching assumption as $F$. Hence the invariant measurable 
conformal structure on $\E'$ is again H\"older continuous by 
Proposition \ref{structure}.
$\QED$


\subsection{Proof of Theorem \ref{periodic}}

By Proposition \ref{K(x,n)}, the assumption of the theorem
implies that \eqref{K(x,n) gamma} is satisfied with $\gamma =0$.
Hence the conclusions of Proposition \ref{close to Id} hold,
which has the following implication for
quasiconformal distortion.

\begin{lemma} \label{distort}
Let $A$ and $B$ be two linear transformations of $\,\R^d$.
Suppose that either $\|A^{-1}B-\Id\,\|\le r $ or 
$\|AB^{-1}-\Id\,\|\le r$, where $r<1$. Then 
$$
    (1-r)/(1+r) \le K(A)/K(B)\le (1+r)/(1-r),
$$
where $K(A)$ and $K(B)$ are quasiconformal distortions
of $A$ and $B$ respectively.
\end{lemma}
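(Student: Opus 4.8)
The statement is a direct consequence of submultiplicativity of the operator norm together with the Neumann series bound for operators near the identity; there is no real obstacle, only bookkeeping of the directions of the inequalities. First I would dispose of the second hypothesis by reducing it to the first: since $K(M)=\|M\|\cdot\|M^{-1}\|=K(M^{-1})$ for any invertible $M$, and since $AB^{-1}=(A^{-1})^{-1}(B^{-1})$, the assumption $\|AB^{-1}-\Id\,\|\le r$ is exactly the assumption $\|(A')^{-1}B'-\Id\,\|\le r$ applied to $A'=A^{-1}$, $B'=B^{-1}$, for which $K(A')=K(A)$ and $K(B')=K(B)$. So it suffices to treat the case $\|A^{-1}B-\Id\,\|\le r$.

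In that case I would write $B=A(\Id+R)$ with $R=A^{-1}B-\Id$ and $\|R\|\le r<1$. The Neumann series gives $\|(\Id+R)^{-1}\|\le (1-r)^{-1}$, while trivially $\|\Id+R\|\le 1+r$ and $\|(\Id+R)^{-1}\|\ge\|\Id+R\|^{-1}\ge(1+r)^{-1}$. Using $\|XY\|\le\|X\|\cdot\|Y\|$ in both orders I would then bound
$$
(1-r)\,\|A\|\ \le\ \|B\|\ \le\ (1+r)\,\|A\|,\qquad
\frac{\|A^{-1}\|}{1+r}\ \le\ \|B^{-1}\|\ \le\ \frac{\|A^{-1}\|}{1-r},
$$
where for the lower bounds one writes $\|A\|=\|B(\Id+R)^{-1}\|\le\|B\|\,\|(\Id+R)^{-1}\|$ and $\|A^{-1}\|=\|(\Id+R)B^{-1}\|\le\|\Id+R\|\,\|B^{-1}\|$.

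Multiplying the two chains of inequalities yields
$$
\frac{1-r}{1+r}\,K(A)\ \le\ K(B)\ \le\ \frac{1+r}{1-r}\,K(A),
$$
and dividing through by $K(B)$ (which is positive) gives the asserted two‑sided bound on $K(A)/K(B)$. The only point requiring a moment's attention is keeping track of which factor of $(1\pm r)$ goes with which norm, so that the final product collapses to $(1\pm r)/(1\mp r)$; everything else is immediate.
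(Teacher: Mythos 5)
Your proof is correct and follows essentially the same route as the paper's: write $B = A(\Id + R)$ with $\|R\| \le r$, then use $\|\Id+R\| \le 1+r$ and $\|(\Id+R)^{-1}\| \le (1-r)^{-1}$ together with submultiplicativity. The only cosmetic difference is that the paper invokes submultiplicativity of the quasiconformal distortion $K$ itself (using $K(\Id+R)\le(1+r)/(1-r)$), whereas you bound $\|B\|$ and $\|B^{-1}\|$ separately in terms of $\|A\|$, $\|A^{-1}\|$ and then multiply; these are the same estimate packaged differently.
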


\begin{proof} 
We recall that $K(A)=K(A^{-1})$ and $K(A_1A_2)\le K(A_1)K(A_2)$.

Suppose that $\|A^{-1}B-\Id\,\|\le r $. Denoting $A^{-1}B-\Id =R$
and multiplying by $A$, we have $B=A(\Id+R)$.
Since for any unit vector $v$, $\, 1-r\le \|(\Id+R)v\| \le 1+r$,
we obtain
   $$
     K(B)\le K(A)K(\Id+R) \le K(A)\cdot(1+r)/(1-r).
   $$
Multiplying by $B^{-1}$, we have $A^{-1}=(\Id+R)B^{-1}$ and hence
   $$
    K(A)=K(A^{-1})\le K(\Id+R) K(B^{-1}) \le (1+r)/(1-r)\cdot K(B).
   $$
The case of $\|AB^{-1}-\Id\,\|\le r$ is similar.
\end{proof}

Now we will show that $F$ is uniformly quasiconformal using a 
dense orbit argument.
Since $f$ is transitive, there exists a point $z\in \M$ with dense 
orbit $\o =\{ f^k z ;\; k \in \Z\}$. We will show that the quasiconformal 
distortion $K_F(z,k)$ is uniformly bounded in $k \in \Z$. Since $\o$ 
is dense and $K_F(x,k)$  is continuous on $\M$ for each $k$,
this implies that $K_F(x,k)$ is uniformly bounded 
in $x \in X$ and $k \in \Z$.

We consider any two points of $\o$ with 
$\dist(f^{k_1}z, f^{k_2}z)< \delta_0$,
where $\delta_0$ is sufficiently small to apply the Anosov Closing 
Lemma \cite[Theorem 6.4.15]{KH}. 
We assume that $k_1<k_2$ and denote $x=f^{k_1}z$ and 
$n=k_2-k_1$, so that $\delta =\dist(x, f^n x)< \delta_0$. 
By the Anosov Closing Lemma there exists $p\in X$ with $f^n p =p$ 
such that $\dist(f^i x, f^i p) \le c\delta$ for $i=0, \dots , n$.

Let $y$ be  a point in  $ W^s_{loc}(p)\cap W^u_{loc}(x)$.
Then by Proposition \ref{close to Id},
$$
 \|(F^n_p)^{-1} \circ F^n_y - \Id \,\| \leq C\delta^{\beta}
\quad\text{and}\quad
 \|(F^{-n}_y)^{-1} \circ F^{-n}_x - \Id \,\| \leq C\delta^{\beta}.
 $$
Hence  by Lemma \ref{distort}
$$
   K_F(y,n)/K_F(p,n) \le (1+C\delta^\beta)/(1-C\delta^\beta)\le 2 
   \quad\text{and}\quad K_F(x,n)/K_F(y,n) \le 2
   $$
if $\delta_0$ is sufficiently small. 
Thus $K_F(x,n) \le 4K_F(p,n) \le 4C_{per}$.

We take $m>0$ such that the set  $\{f^j(z);\; |j|\le m\}$ is 
$\delta_0$-dense in $\M$. Let
$K_m =\max\{K_F(z,j):\; |j|\le m\}$. Then for any $k>m$
there exists $j$, $|j|\le m$, such that $\dist (f^k(z), f^j(z))\le \delta_0$
and hence 
  $$
   K_F(z,k) \le K_F(z,j)\cdot K_F(f^j(z), k-j) \le K_m \cdot 4C_{per}.
  $$ 
The case of $k<-m$ is considered similarly. Thus $K_F(z,k)$ is 
uniformly bounded and hence so is $K_F(x,k)$ for all $x\in \M$ 
and $k\in \Z$.

Thus, $F$ is uniformly quasiconformal on $\E$. It follows from 
Corollary \ref{qc implies conf} that there exists a H\"older 
continuous metric on $\E$ with respect to which $f$ is conformal.
\vskip.2cm

Now we prove the second part of the theorem.
We observe that 
$K_F(p,n)=\|F^n_p\| \cdot \|(F^n_p)^{-1}\| \le (C_{per}')^2$
and hence  $\,F$ is conformal with respect 
to a H\"older continuous Riemannian metric $g$ on $\E$.
This means that there exists a positive H\"older continuous function 
$a(x)$  such that for each  $x\in\M$ and each $v\in \E_x$
$$
   \|F_x(v)\|_{g(fx)} = a(x) \cdot \|v\|_{g(x)}.
 $$  

It remains to renormalize the metric $g$. 
For a positive function $\varphi(x)$, we consider a new metric
$\tilde g (x)= g(x)/\varphi(x)$.
Then we have 
$$
   \varphi(fx) \cdot \|F_x(v)\|_{\tilde g(fx)} = 
   a(x)\varphi(x) \cdot \|v\|_{\tilde g(x)}.
 $$  
Therefore  we need to find a H\"older continuous
function $\varphi(x)$ such that $\varphi(fx)= a(x)\varphi(x)$, 
i.e. 
\begin{equation}\label{a(x)}
   a(x)=\varphi(fx)/\varphi(x) \quad \text{for all }x\in\M.
\end{equation}

Let $p$ be a periodic point of a period $n$. Then 
$\,a(p)a(fp)\dots a(f^{n-1}p)=\|F^n_p\|_{g(p)}$.
If $\|F^n_p\|_{g(p)} > 1$ then 
$\|F^{mn}\|_{g(p)}=\|F^n_p\|_{g(p)}^m \to \infty $ 
as $m\to\infty$. If $\|F^n_p\|_{g(p)} < 1$ then 
$\|(F^{mn})^{-1}\|_{g(p)}=\|(F^n_p)^{-1}\|_{g(p)}^m = 
\|F^n_p\|_{g(p)}^{-m}\to \infty$ as $m\to\infty$. 
Either case contradicts the assumption of the proposition and hence 
$a(p)a(fp)\dots a(f^{n-1}p)=1$ for any periodic point $p$.
Now, Liv\v{s}ic Theorem \cite[Theorem 19.2.1]{KH}
implies that the equation \eqref{a(x)} has a H\"older
continuous solution $\varphi(x)$, and  $F$ is an isometry
with respect to the H\"older continuous metric $\tilde g$.
$\QED$


\subsection{Proof of Theorem \ref{dim 2}}
By Proposition \ref{K(x,n)}, the assumption on the periodic data 
implies that \eqref{K(x,n) gamma} is satisfied with $\gamma =0$.
Now Proposition \ref{dichotomy}  yields that $F$ preserves either a  
H\"older continuous one-dimensional sub-bundle in $\E$ 
or a H\"older continuous invariant conformal structure on $\E$.

Suppose that there is a continuous invariant one-dimensional sub-bundle. 
Hence for any point $p$ and any $n$ with $f^np=p$ we obtain an invariant 
line for $F^n_p:\E_p \to \E_p$. This implies that the eigenvalues 
of $F^n_p$ are real. Hence, by the assumption of the theorem,
they are either $\lambda ,\lambda$ or $\lambda ,-\lambda$. 
If $F$ is orientation preserving, the former is always the case.
It follows that  $F^n_p = \lambda \cdot \Id$ since it is 
diagonalizable,  and hence $K_F(p,n)=1$.  For such $F$ we can 
apply Theorem \ref{periodic} and obtain a H\"older continuous metric
on $\E$ with respect to which $F$ is conformal. 

If $\E$ is not orientable we can pass to a double cover. If
$F$ is orientation reversing, we can consider cocycle $F^2$ 
over $f^2$. Thus we can always obtain an orientation preserving
cocycle $F'$ which by the above is conformal. This implies
uniform quasiconformality of the original cocycle $F$.
Now Corollary \ref{qc implies conf} yields conformality of $F$.

We conclude that $F$ is conformal with respect to a H\"older 
continuous metric on $\E$. The second part can be establishes 
in the same way as in the proof Theorem \ref{periodic}. Indeed, 
the assumption implies that for any periodic point $p$ the map $F^n_p$
is conjugate to an orthogonal matrix and hence there exists a 
constant $C(p)$ such that $\max\{\|F^n_p\|, \|(F^n_p)^{-1}\|\} \le C(p)$ 
for any period $n$ of $p$.
$\QED$

\vskip.2cm


\subsection{Proof of Proposition \ref{example}}

The idea of the example was 
suggested to us by M. Guysinsky and is similar to his example in \cite{Guy}.
We recall that since the tangent bundle is trivial, a cocycle $F:\E\to \E$ 
is defined by a function $A:\M\to GL(d,\R)$ via
$F(x,v)=(fx, A(x)v).$ First we construct an example for $d=3$.
\vskip.1cm

Let $S$ be a closed, and hence compact,  $f$-invariant set in $\M$ 
that does not contain any periodic points (such sets always exist for
Anosov systems and can be constructed using symbolic dynamics).
Let $\a : \M\to \R$ be given by 
$$
   \a(x)=\dist (x,S)  \cdot \e / (2\,\diam\M).
$$ 
Then $\a$ is Lipschitz, 
$$
    \a(x)=0 \; \text{ for all }x\in S \quad \text{and}\quad
    0< \a(x) \le\e /2  \; \text{ for all }x\notin S.
$$
We set 
$$ 
     A(x)=\left( \begin{array}{ccc}
             \cos \ta(x)  & -\sin \ta(x)       & \; \e       \\
             \sin \ta(x)   & \;\;\;\cos \ta(x) & \; 0         \\
             0               & 0                    & \; 1    \\             
           \end{array} \right) ,
$$
where $\ta$ is a modification of the function $\a$ constructed below.
For a point $p\in \M$ and $n\in \N$ we denote 
$$
\begin{aligned}
    A(p,n)&=A(f^{n-1}p) \cdots A(fp) \cdot A(p), \\
   \ta(p,n)&=\ta(f^{n-1}p) + \dots+ \ta(fp)+\ta(p).
 \end{aligned}
$$
Then we have
$$
      A(p,n) = \left( \begin{array}{ccc}
             \cos \ta(p,n)  & -\sin \ta(p,n)       & \; \ast       \\
             \sin \ta(p,n)   & \;\;\;\cos \ta(p,n) & \; \ast         \\
             0               & 0                    & \; 1    \\             
           \end{array} \right). 
 $$

 Let $p$ be a periodic point and let $n$ be its minimal period. 
Since the eigenvalues of $A(p,n)$ are of modulus $1$, it is conjugate 
to an orthogonal matrix if and only if it is diagonalizable over $\C$.
For $A(p,n)$ to be diagonalizable, it suffices to have three different 
eigenvalues, which is equivalent to $\ta(p,n)\ne \pi k$. 
The function $\a$ does not necessarily satisfy this condition at every 
periodic point, so we modify it inductively
to obtain a function $\tilde \a$ that does.

Since there are countably many periodic orbits for $f$,
we can order them $\{ \O_1, \O_2, \dots \}$.
Let $\a_0=\a$ and suppose that $\a_{m-1}$ is defined.
If $\a_{m-1}(p, n_m)\ne \pi k$, where $p\in \O_m$ and $n_m$ is
the minimal period of $p$, we set $\a_m=\a_{m-1}$.
If $\a_{m-1}(p, n_m)= \pi k$ we modify $\a_{m-1}$ 
in a small neighborhood of $\O_m$ to obtain $\a_m$.
Let 
$$
  \delta_m = \dist \,
(\O_m, \, \O_1 \cup \dots \cup \O_{m-1}\cup S) / 2.
$$
We change $\a_{m-1}$ in the $\delta_m$-neighborhood of $\O_m$
so that the new function $\a_m$ is Lipschitz on $\M$,
$\a_m(p, n_m)\ne \pi k$, and the Lipschitz norm of $\a_{m-1} -\a_m$ is
less than $\e/(4m^2)$.
Clearly $\a_m=\a_{m-1}$ on $S$ and on $\O_1, \dots ,\O_{m-1}$.
We define $\ta(x)=\lim_{m \to \infty} \a_m(x)$.

It follows from the construction that $\ta(x)$ is Lipschitz,
$\ta(x)=0$ on $S$, and $\ta(p, n)\ne \pi k$  for any periodic point $p$ 
of a minimal period $n$. Thus the matrix $A(p,n)$ has three different eigenvalues:
$e^{i\ta(p,n)}$, $e^{-i\ta(p,n)}$, 1, and hence is diagonalizable. 
We also note that since $\max |\a(x)-\ta(x)| \le \e/2$, we have 
$\max |\ta(x)| \le \e$ and 
thus $A(x)$ is $\e$-close to the identity. 

\vskip.1cm

Now we show that the cocycle $F$ is not uniformly quasiconformal.
Let $x$ be a point in $S$. Then $x$ is non-periodic and 
$\ta(f^nx)=0$ for every $n$.
Hence 
$$
     A(x) = \left( \begin{array}{ccc}  1 & 0 & \e \\ 0 & 1 & 0 \\ 0& 0 &1
     \end{array} \right) \quad \text{and}\quad
    A(x,n) = \left( \begin{array}{ccc}  1 & 0 & n\e \\ 0 & 1 & 0 \\ 0& 0 &1
     \end{array} \right). 
$$ 
The quasiconformal distortion of $F$ is not uniformly bounded
along the orbit of $x$, as for the first coordinate vector
$\|F_x^n v_1\|=\|A(x,n)v_1\|\to\infty$, while for the second coordinate 
vector $\|F_x^n v_2\|=\|A(x,n)v_2\|=1$. It follows that $F$ cannot 
be conformal with respect to any continuous Riemannian metric on $\E$.
\vskip.1cm

This example can be extended to any dimension $d\ge 4$
by considering 
$$
   F(x,v)=(fx, \tilde A (x)v) \quad \text{with}\quad
     \tilde A (x) = \left( \begin{array}{cc} A(x) & 0 \\ 0 & \Id_{d-3}
     \end{array} \right),
$$ 
where $\Id _{d-3}$ is the $(d-3)\times (d-3)$ identity matrix.
$\QED$


\end{document}